\documentclass{amsart}
\usepackage{amsmath}
\usepackage{amsfonts}
\usepackage{amssymb}
\usepackage[driverfallback=dvipdfm]{hyperref}
\usepackage{amssymb}
\usepackage{amsbsy}
\usepackage{amscd}
\usepackage{amsmath}
\usepackage{amsthm}
\usepackage{amsxtra}
\usepackage{latexsym}
\usepackage[mathscr]{eucal}
\usepackage{upgreek}
\usepackage{wasysym}
\usepackage[all,cmtip]{xy}

\usepackage{xcolor}
\definecolor{rouge}{rgb}{0.85,0.1,.4}
\definecolor{bleu}{rgb}{0.1,0.2,0.9}
\definecolor{violet}{rgb}{0.7,0,0.8}


\newcommand{\germ}{\mathfrak}
\newcommand{\cprime}{$'$}
\newcommand{\on}{\operatorname}
\newcommand{\+}{\mathop{\oplus}}
\renewcommand{\*}{{\otimes}}
\newcommand{\mc}{\mathcal}
\newcommand{\mf}{\mathfrak}

\newcommand{\fing}{\mf{g}}
\newcommand{\finq}{\mf{q}}
\newcommand{\affg}{\widehat{\mf{g}}}
\newcommand{\affq}{\widehat{\mf{q}}}

\newcommand{\Z}{\mathbb{Z}}
\newcommand{\C}{\mathbb{C}}
\newcommand{\N}{\mathbb{N}}

\newcommand{\W}{\mathscr{W}}
\newcommand{\ra}{\rightarrow}
\newcommand{\lam}{\lambda}

\DeclareMathOperator{\End}{End}

\DeclareMathOperator{\gr}{gr}

\DeclareMathOperator{\ad}{ad}

\theoremstyle{theorem}
\newtheorem{Th}{Theorem}[section]

\newtheorem{Pro}[Th]{Proposition}

\newtheorem{Lem}[Th]{Lemma}

\theoremstyle{remark}

\newtheorem{Rem}[Th]{Remark}

\newtheorem{Ex}[Th]{Example}

\begin{document}
\subjclass[2010]{17B35, 17B08, 17B20, 17B69}
\title[Quantising Mishchenko--Fomenko subalgebras for centralizers]{Quantizing 
Mishchenko--Fomenko subalgebras for centralizers
via affine $W$-algebras}
\author{Tomoyuki Arakawa}
\address{Research Institute for Mathematical Sciences, Kyoto University,
 Kyoto 606-8502 JAPAN}
\address{Dept of Mathematics, MIT, 182 Memorial Drive, Cambridge, MA 02139, USA}
\email{arakawa@kurims.kyoto-u.ac.jp}
\author{Alexander Premet}
\address{School of Mathematics, The University of Manchester,
Oxford Road, M13 9PL, UK}
\email{alexander.premet@manchester.ac.uk}
\thanks{{\it Keywords}: Mishchenko--Fomenko subalgebras, vertex algebras, affine $W$-algebras}
\thanks{T. Arakawa is partially supported by JSPS KAKENHI Grants
(\#25287004 and \#26610006).}
\thanks{A. Premet was supported by the Leverhulme Trust
(Grant RPG-2013-293)}
\begin{abstract}
We use affine $W$-algebras
to 
quantize Mishchenko--Fomenko subalgebras for centralizers 
of nilpotent elements in finite dimensional simple Lie algebras
under certain assumptions that are satisfied for all cases in type $\rm A$ and all minimal nilpotent cases outside type ${\rm E}_8$.
\end{abstract}
\maketitle\maketitle
\begin{center}
{\em Dedicated to Ernest Borisovich Vinberg for his 
$80$th birthday}
\end{center}

\vskip1ex

\vskip1ex

\vskip1ex

\vskip1ex

\vskip1ex

\vskip1ex 

\vskip1ex

\section{Introduction}
Let $G$ be a simple algebraic group over $\C$ and 
$\fing={\rm Lie}(G)$. Let
$e$ be a nilpotent element of $\fing$ and
$\{e,f,h\}\subset \fing$
an $\mf{sl}_2$-triple
containing
$e$.
Given $x\in \fing$ we write $\fing^x$ for the centralizer of $x$ in $\fing$. By Elashvili's conjecture, which is now a theorem, the index of $\fing^e$ equals 
$\ell:=\on{rk} \fing$; see \cite{C-M} and references therein. This means that the set of those linear functions on $\fing^e$ whose stabilizer in $\fing^e$ has dimension equal to $\ell$ is non-empty and Zariski open in $(\fing^e)^*$. The complement of this open subset in $(\fing^e)^*$ will be denoted by $(\fing^e)^*_{\on{sing}}$. 

We identify $\fing$ and $\fing^*$ as $G$-modules by using the Killing form of $\fing$. Under this identification, it was shown in \cite{PanPreYak06}
that for
$P\in S(\fing)^{\fing}$
the minimal degree component 
 ${}^{e\!} P$  of the restriction
$P|_{e+\fing^f}$ belongs to
$S(\fing^e)^{\fing^e}$.
A homogeneous generating system
$\{P_1,\dots,P_\ell\}$ of $S(\fing)^{\fing}$  is called 
{\em good} for $e$ if 
\begin{align*}
\sum_{i=1}^\ell \deg {}^{e\!}P_i=
(\dim \fing^e+\ell)/2.
\end{align*}
 \begin{Th}[\cite{PanPreYak06}]\label{Th:PPY}
Suppose $e$ admits a good generating system
$\{P_1,\dots, P_{\ell}\}$ in $S(\fing)^{\fing}$
and assume further that 
$(\fing^e)^*_{\on{sing}}$ has codimension $\geq 2$ in $(\fing^e)^*$.
Then $S(\fing^e)^{\fing^e}$ is a polynomial algebra in
variables
${}^{e\!}P_1,\dots, {}^{e\!}P_\ell$.
 \end{Th}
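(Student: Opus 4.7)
The plan is to first establish algebraic independence of the initial forms ${}^{e\!}P_1,\dots,{}^{e\!}P_\ell$ in $S(\fing^e)$, and then to use the codimension and good-degree hypotheses to promote these into a system of polynomial generators for the full invariant algebra $S(\fing^e)^{\fing^e}$.

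\emph{Algebraic independence.} The restrictions $P_i|_{e+\fing^f}$ are algebraically independent in $\mc{O}(e+\fing^f)\cong S(\fing^e)$, because $e+\fing^f$ is transverse to the $G$-orbit of $e$ and the $P_i$ generate $S(\fing)^\fing$. Equip $S(\fing^e)$ with the Kazhdan filtration associated to $\{e,f,h\}$; the ${}^{e\!}P_i$ are then the initial forms of $P_i|_{e+\fing^f}$ with respect to this filtration, and each lies in $S(\fing^e)^{\fing^e}$ by the cited Panyushev--Premet--Yakimova result. Algebraic independence of the initial forms follows from a standard graded argument combined with the good hypothesis $\sum\deg{}^{e\!}P_i=(\dim\fing^e+\ell)/2$: this value is the minimum compatible with Elashvili's equality $\on{ind}\fing^e=\ell$, so any nontrivial algebraic relation among the initial forms would force the sum of degrees to drop strictly below it, a contradiction.

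\emph{Polynomial generation.} Set $A:=\C[{}^{e\!}P_1,\dots,{}^{e\!}P_\ell]$ and $B:=S(\fing^e)^{\fing^e}$. Algebraic independence combined with $\on{ind}\fing^e=\ell$ yields that $A\subseteq B$ are $\ell$-dimensional domains sharing a common field of fractions. Both are integrally closed---$A$ tautologically, $B$ because it is the $\fing^e$-invariant subring of the UFD $S(\fing^e)$---so $A=B$ will follow once $B$ is shown to be integral over $A$. Consider the morphism $\Phi\colon(\fing^e)^*\to\Spec A\cong\C^\ell$ defined by the ${}^{e\!}P_i$. On the regular locus $(\fing^e)^*\setminus(\fing^e)^*_{\on{sing}}$ the fibers of $\Phi$ are $\fing^e$-stable and of the expected codimension $\ell$, agreeing with regular coadjoint orbits. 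A fiber-dimension count then shows that the image of $(\fing^e)^*_{\on{sing}}$ under $\Phi$ has codimension $\geq 2$ in $\Spec A$, and a Hartogs-type extension across this codim-$\geq 2$ bad locus forces every $b\in B$ to be integral over $A$, hence in $A$.

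\emph{Main obstacle.} The heart of the argument is the second step, specifically converting the codim-$\geq 2$ hypothesis on $(\fing^e)^*_{\on{sing}}$ into a corresponding codim-$\geq 2$ control on the locus in $\Spec A$ where $B$ could fail to be integral over $A$. The good-degree condition $\sum\deg{}^{e\!}P_i=(\dim\fing^e+\ell)/2$ is the arithmetic shadow of the equidimensionality of $\Phi$ away from the singular locus; it is what allows the fiber-dimension bookkeeping to produce the required codimension estimate. Without this precise calibration between the degree sum and $(\dim\fing^e+\ell)/2$, neither the algebraic-independence step nor the final extension argument would close up cleanly.
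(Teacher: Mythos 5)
The paper does not prove Theorem~\ref{Th:PPY} itself---it is cited from \cite{PanPreYak06}, and the proof strategy there is: (i) show that the ${}^{e\!}P_i$ are algebraically independent and that their Jacobian locus $\mathcal J({}^{e\!}P_1,\ldots,{}^{e\!}P_\ell)$ coincides with $(\fing^e)^*_{\rm sing}$ (this is PPY Theorem~2.1, and uses the good-degree hypothesis in an essential way); (ii) invoke the Skryabin-type algebraic-closure theorem (PPY Theorem~1.1, the characteristic-zero version of \cite[Theorem~5.4]{Sk02}) under the codimension-$\geq 2$ hypothesis to conclude that $\C[{}^{e\!}P_1,\ldots,{}^{e\!}P_\ell]$ is \emph{algebraically} closed in $S(\fing^e)$; (iii) combine this with ${\rm ind}\,\fing^e=\ell$, which forces the two rings to be equal. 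The current paper mirrors exactly this chain of reasoning in its Proposition~\ref{Pro:inv}.

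Your outline is in the right spirit, but the two pivotal steps are not actually carried out, and the gaps are real. On algebraic independence: passing from the (correct) algebraic independence of $P_i|_{e+\fing^f}$ to algebraic independence of their lowest-degree components ${}^{e\!}P_i$ is \emph{not} automatic---taking initial forms can create relations (already for two variables: $x$ and $x+y^2$ are independent but their initial forms are both $x$). Your assertion that ``any nontrivial algebraic relation among the initial forms would force the sum of degrees to drop strictly below'' is not a coherent deduction: a relation among the ${}^{e\!}P_i$ does not change their degrees, and the inequality $\sum\deg{}^{e\!}P_i\ge(\dim\fing^e+\ell)/2$ that you are tacitly using is itself the content of a nontrivial PPY theorem, not a direct consequence of Elashvili.

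On polynomial generation: the claim that $A=\C[{}^{e\!}P_1,\ldots,{}^{e\!}P_\ell]$ and $B=S(\fing^e)^{\fing^e}$ ``share a common field of fractions'' is precisely the kind of statement that needs the algebraic-closure theorem; what you can get for free from the transcendence degree count is only that ${\rm Frac}(B)$ is \emph{algebraic} over ${\rm Frac}(A)$, and $B$ integral over $A$ plus $A$ normal does not then give $B\subseteq A$ unless you independently know $B\subseteq{\rm Frac}(A)$. The assertion that fibers of $\Phi$ over the regular locus ``agree with regular coadjoint orbits'' is essentially the conclusion you are trying to prove, and the fiber-dimension count bounding $\dim\Phi\big((\fing^e)^*_{\rm sing}\big)$ requires a lower bound on the fiber dimensions of $\Phi$ restricted to the singular locus, which is not supplied. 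The ``Hartogs-type extension'' is the correct intuition behind Skryabin's theorem, but invoking it by name does not discharge the work; the actual codimension-$\geq 2$ input enters through PPY Theorem~1.1 applied to $A$ inside the full polynomial ring $S(\fing^e)$, yielding the \emph{algebraic} closedness of $A$ in $S(\fing^e)$, which is strictly stronger than what your integral-closure argument is set up to deliver.
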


In \cite{PanPreYak06} it was also proved that
the assumptions of Theorem \ref{Th:PPY} hold for all nilpotent elements in simple Lie algebras of
type $\rm A$ and $\rm C$. We refer to
\cite{CM} for further results on polynomiality of 
$S(\fing^e)^{\fing^e}$.

\smallskip
Given $F\in S(\fing^e)$ and $\chi\in(\fing^e)^*$  we put \begin{align*}
 D_{\chi}F(x)\,:=\,\frac{d}{du}
F(x+ u\chi)|_{u=0}\quad\ \,\big(\forall\,x\in (\fing^e)^*\big),
\end{align*} so that
$F(x+ u\chi)=\sum_{i=0}^{\deg F}\frac{u^i}{i!} D_{\chi}^iF(x)$.
To each $\chi\in(\fing^e)^*$ we then attach
its {\it Mishchenko--Fomenko subalgebra}
$\bar{\mc{A}}_{e,\chi}$ of
$S(\fing^e)$.
Recall that the $\C$-algebra
$\bar{\mc{A}}_{e,\chi}$ is generated by 
the $\chi$-shifts $D_{\chi}^i (F)$ of all 
$F\in S(\fing^e)^{\fing^e}$.
It is well-known \cite{MisFom78} that
$\bar{\mc{A}}_{e,\chi}$ is a Poisson-commutative subalgebra 
of the Poisson--Lie algebra $S(\fing^e)$.

\begin{Th}[\cite{PanYak08}]\label{Th:PY}
Under the assumptions of Theorem \ref{Th:PPY} suppose further that 
$(\fing^e)^*_{\on{sing}}$ has codimension $\geq 3$ in $(\fing^e)^*$.
Then, for a  regular element 
$\chi$ 
of  $(\fing^e)^*$, the subalgebra
$\bar{\mc{A}}_{e,\chi}$ is
freely generated by all
$D_{\chi}^j ({}^{e\!}P_i)$ with
$i=1,\dots, \ell$ and $j=0,\dots \deg {}^{e\!}P_i-1$.
Moreover,
$\bar{\mc{A}}_{e,\chi}$
 is a maximal Poisson-commutative subalgebra of the Poisson--Lie algebra $S(\fing^e)$.
 \end{Th}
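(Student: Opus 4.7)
The plan is to implement the classical argument-shift method in the setting of the polynomial Poisson algebra $S(\fing^e)$, combining Theorem~\ref{Th:PPY} with a Bolsinov--Sadetov type criterion for algebraic independence of the shifts.

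Poisson commutativity of $\bar{\mc A}_{e,\chi}$ is formal. For $F,G\in S(\fing^e)^{\fing^e}$, the $\fing^e$-invariance forces $\{F(x+u\chi),G(x+v\chi)\}$ to vanish on the diagonal $u=v$, and extracting the $u^iv^j$-coefficient yields $\{D_\chi^iF,D_\chi^jG\}=0$. This is the classical Mishchenko--Fomenko mechanism, now run on $\fing^e$ rather than $\fing$.

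For the freeness statement, Theorem~\ref{Th:PPY} identifies $S(\fing^e)^{\fing^e}$ with $\C[{}^{e\!}P_1,\ldots,{}^{e\!}P_\ell]$, so the $D_\chi^j({}^{e\!}P_i)$ generate $\bar{\mc A}_{e,\chi}$. The good-generating-system hypothesis supplies the total count
\[\sum_{i=1}^\ell \deg{}^{e\!}P_i\,=\,\tfrac12(\dim\fing^e+\ell),\]
which matches the expected transcendence degree. To show algebraic independence, I would carry out the standard Jacobian calculation: pick a regular $x_0\in(\fing^e)^*$ so that $d{}^{e\!}P_1|_{x_0},\ldots,d{}^{e\!}P_\ell|_{x_0}$ span the coadjoint annihilator of $x_0$, expand ${}^{e\!}P_i(x_0+t\chi)=\sum_j \tfrac{t^j}{j!}D_\chi^j({}^{e\!}P_i)(x_0)$, and use a Vandermonde-type rearrangement along the line $x_0+\C\chi$ to assemble a non-degenerate Jacobian of size $\sum_i\deg{}^{e\!}P_i$. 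The codimension $\ge 3$ hypothesis enters through a Bertini-type argument: for regular $\chi$ it ensures that the line $x_0+\C\chi$, and more importantly the 2-plane $\C\chi+\C x_0$, meet $(\fing^e)^*_{\on{sing}}$ only in the strong sense required to run Bolsinov's criterion uniformly.

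Maximality would then follow from the Vinberg bound $\on{tr.deg}\,\mc B\le\tfrac12(\dim\fing^e+\on{ind}\fing^e)$ for any Poisson-commutative $\mc B\subset S(\fing^e)$: since $\on{ind}\fing^e=\ell$ by Elashvili's theorem and $\bar{\mc A}_{e,\chi}$ attains this bound, it must be maximal. The main obstacle will be the algebraic-independence step, where one must convert the codimension hypothesis on $(\fing^e)^*_{\on{sing}}$ into a sharp enough statement on the shifts to make the Jacobian nonsingular on a dense open set; once this is in hand, the rest is a routine combination of the argument shift method with the Vinberg inequality.
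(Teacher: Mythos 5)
This theorem is quoted from Panyushev--Yakimova \cite{PanYak08}; the paper under review does not reprove it, so there is no internal proof to compare against. Judged on its own merits, your sketch captures the right framework (argument-shift on $\fing^e$, Jacobian/Vandermonde for independence, Vinberg bound for maximality), but it has two substantive issues.

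First, you place the codimension-$\geq 3$ hypothesis in the algebraic-independence step. That is misplaced: algebraic independence of the shifts $D_\chi^j({}^{e\!}P_i)$ for regular $\chi$ already holds under the codimension-$\geq 2$ hypothesis of Theorem~\ref{Th:PPY} together with the good-generating-system condition. This is exactly how the paper itself invokes \cite[Theorem~3.2(i)]{PanYak08} in the proof of Theorem~\ref{Th:Main} --- the independence is used in situations where only codimension $\geq 2$ is guaranteed. The codimension-$\geq 3$ condition is specifically what drives the maximality assertion.

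Second, your maximality argument has a genuine gap. From the Vinberg inequality $\mathrm{tr.deg}\,\mc{B}\le\tfrac12(\dim\fing^e+\on{ind}\fing^e)$ and the fact that $\bar{\mc A}_{e,\chi}$ attains this bound, you conclude ``it must be maximal.'' That inference is not valid: a Poisson-commutative subalgebra $\mc{C}\supsetneq\bar{\mc A}_{e,\chi}$ would still have the same transcendence degree, hence be \emph{algebraic} over $\bar{\mc A}_{e,\chi}$ --- but nothing yet rules out a strictly larger algebraic extension inside $S(\fing^e)$. What is needed in addition is that $\bar{\mc A}_{e,\chi}$ is algebraically closed in $S(\fing^e)$, which is where the codimension-$\geq 3$ hypothesis actually enters: one must show the Jacobian locus of the full system of shifts $D_\chi^j({}^{e\!}P_i)$ has codimension $\geq 2$ in $(\fing^e)^*$ and then apply a Skryabin/PPY-type criterion (as is done, in a slightly different setting, in Proposition~\ref{Pro:inv}). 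Without that algebraic-closedness step, attaining the transcendence-degree bound does not yield maximality.
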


The canonical filtration of the universal enveloping algebra $U(\fing^e)$ induces that on any $\C$-subalgebra 
$\mc{A}$ of $U(\fing^e)$ and we write $\gr \mc{A}$
for the corresponding graded subalgebra of $S(\fing^e)
=\gr\, U(\fing^e)$. More precisely, $\gr\mc{A}$ is generated by the symbols in $S(\fing^e)$ of all $a\in\mc{A}$.
In this article we prove  the following:
 \begin{Th}\label{Th:Main}
Under the assumptions of Theorem \ref{Th:PPY}
there exists a 
commutative subalgebra $\mc{A}_{e,\chi}$ of $U(\fing^e)$
such that 
$\gr \mc{A}_{e,\chi} = \bar{\mc{A}}_{e,\chi}$. This subalgebra is freely generated by $(\dim\mf{g}^e+\ell)/2$ elements.
If $(\fing^e)^*_{\on{sing}}$ has codimension $\geq 3$ in $(\fing^e)^*$ then $\mc{A}_{e,\chi}$ is a maximal commutative subalgebra of $U(\fing^e)$.
 \end{Th}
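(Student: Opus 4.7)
The plan is to construct $\mc{A}_{e,\chi}$ as the image, under a $\chi$-twisted evaluation, of the critical-level center $\mf{z}(\W_e)$ of the affine $W$-algebra $\W_e:=\W^{-h^\vee}(\fing,e)$. This is the affine-$W$-algebraic analogue of the Feigin--Frenkel--Reshetikhin--Rybnikov construction of quantum shift-of-argument subalgebras in $U(\fing)$: the Feigin--Frenkel center $\mf{z}(\affg)$ of the vacuum module at critical level is replaced by $\mf{z}(\W_e)$, and the evaluation at a regular $\chi\in\fing^*$ is replaced by one at a regular $\chi\in(\fing^e)^*$.

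Concretely, at the critical level $\mf{z}(\W_e)$ is a commutative vertex subalgebra of $\W_e$. The good-generating-system hypothesis of Theorem \ref{Th:PPY} is used to produce free generating fields $S_1,\dots,S_\ell$ of $\mf{z}(\W_e)$ whose symbols with respect to the canonical filtration on $\W_e$ recover the classical generators ${}^{e\!}P_1,\dots,{}^{e\!}P_\ell\in S(\fing^e)^{\fing^e}$; in particular the conformal weights satisfy $\deg S_i=\deg({}^{e\!}P_i)$. The Fourier modes $S_i^{[j]}$ with $1\le i\le\ell$ and $0\le j\le\deg({}^{e\!}P_i)-1$ then form a set of cardinality $\sum_{i=1}^{\ell}\deg({}^{e\!}P_i)=(\dim\fing^e+\ell)/2$. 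Choosing a regular $\chi\in(\fing^e)^*$, apply a $\chi$-dependent specialization of the mode algebra of $\mf{z}(\W_e)$ to $U(\fing^e)$; commutativity of $\mf{z}(\W_e)$ as a vertex algebra implies that all these modes commute in the source, and commutativity survives the specialization. Define $\mc{A}_{e,\chi}$ as the subalgebra of $U(\fing^e)$ generated by the images of the $S_i^{[j]}$.

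To conclude $\gr\mc{A}_{e,\chi}=\bar{\mc{A}}_{e,\chi}$ one checks that the PBW symbol of the image of $S_i^{[j]}$ equals $D_\chi^j({}^{e\!}P_i)$, which by definition generates $\bar{\mc{A}}_{e,\chi}$: this is a direct computation once the $\chi$-specialization is fixed, tracking the shift explicitly. Free generation of $\mc{A}_{e,\chi}$ on $(\dim\fing^e+\ell)/2$ elements then follows from the corresponding freeness of $\bar{\mc{A}}_{e,\chi}$ provided by Theorem \ref{Th:PY}. For the maximality statement under the codimension $\ge 3$ hypothesis: any commutative subalgebra $\mc{B}\supset\mc{A}_{e,\chi}$ of $U(\fing^e)$ has Poisson-commutative associated graded $\gr\mc{B}\supset\bar{\mc{A}}_{e,\chi}$, and the maximal Poisson-commutativity from Theorem \ref{Th:PY} forces $\gr\mc{B}=\bar{\mc{A}}_{e,\chi}=\gr\mc{A}_{e,\chi}$, whence $\mc{B}=\mc{A}_{e,\chi}$ by a standard filtration argument.

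The step I expect to be the main obstacle is producing generating fields $S_i\in\mf{z}(\W_e)$ whose symbols match the prescribed good generators ${}^{e\!}P_i$. In the affine Kac--Moody setting the Feigin--Frenkel theorem canonically supplies such generators of $\mf{z}(\affg)$, but for $\mf{z}(\W_e)$ one must push these through quantum Drinfeld--Sokolov reduction, or construct them directly inside $\W_e$, and then verify that the classical limits reproduce precisely the ${}^{e\!}P_i$. It is exactly this compatibility, requiring control over how the reduction interacts with the Panyushev--Premet--Yakimova generators, that forces the good-generating hypothesis and explains the restriction in the abstract to type $\rm A$ and to the minimal nilpotent cases outside $\rm E_8$. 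A subsidiary technical point is designing the $\chi$-specialization carefully enough that commutativity is preserved and symbols are readable off, since $\W_e$ lacks the clean PBW structure of $U(\affg)$ and requires a nontrivial adaptation of Rybnikov's Kac--Moody construction.
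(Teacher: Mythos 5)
Your overall strategy is the right one---use the critical-level center of the affine $W$-algebra $\W^{\kappa_c}(\fing,e)$ and a Rybnikov-type $\chi$-evaluation---but there is a genuine gap in how the evaluation is set up, and it is not a subsidiary technical point but the main structural move of the proof. The $\chi$-twisted evaluation $\Phi_\chi$ used by Rybnikov and Feigin--Frenkel--Toledano Laredo is defined on $U(\widehat{\mf{q}}_-)$ via the vector-space identification $V^\kappa(\mf{q})\cong U(\widehat{\mf{q}}_-)$; it sends $x_{(-m)}\mapsto u^{-m}x+\delta_{m,1}\chi(x)$. There is no analogous canonical map out of $\W^{\kappa_c}(\fing,e)$ or out of its mode algebra, because the $W$-algebra has no PBW identification with an enveloping algebra of $\fing^e$-loop modes. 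Your proposal to ``apply a $\chi$-dependent specialization of the mode algebra of $\mf{z}(\W_e)$ to $U(\fing^e)$'' therefore does not yet exist, and constructing it is not a matter of tracking a shift.

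What the paper does instead is to first establish Theorem~\ref{Th:FF-theorem-for-centralizer}: the center $Z(V^{\kappa_{e,c}}(\fing^e))$ of the ordinary universal affine vertex algebra of $\fing^e$ (at the appropriate critical-level form $\kappa_{e,c}$) is a polynomial algebra on lifts ${}^{e\!}\hat P_1,\dots,{}^{e\!}\hat P_\ell$ of the good generators and their derivatives. This is precisely where the $W$-algebra enters and does all the work: by the Kac--Wakimoto filtration one has $\gr_F\W^{\kappa_c}(\fing,e)\cong V^{\kappa_{e,c}}(\fing^e)$ as vertex algebras (Theorem~\ref{Th:loop-filtration}), and by Theorem~\ref{Th:center-of-W} the Feigin--Frenkel center $Z(V^{\kappa_c}(\fing))$ embeds into $Z(\W^{\kappa_c}(\fing,e))$; taking $F$-symbols of the images of the Feigin--Frenkel generators $\hat P_i$ gives elements of $Z(V^{\kappa_{e,c}}(\fing^e))$ whose symbols---now using the commutation of the $F$-filtration with the Li filtration, equation \eqref{eq:two-filtration-commutes}---are exactly the ${}^{e\!}P_i$; completeness then comes from the jet-scheme computation of Theorem~\ref{Th:associated-graded}, which uses the codimension-$\geq2$ hypothesis. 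Once $Z(V^{\kappa_{e,c}}(\fing^e))$ is known, one applies the standard $\Phi_\chi$ to \emph{it}, not to $\mf{z}(\W_e)$, and $\mc{A}_{e,\chi}:=\Phi_\chi\bigl(Z(V^{\kappa_{e,c}}(\fing^e))\bigr)$. The identification of symbols via Lemma~\ref{Lem:symbol} together with \cite[Theorem~3.2(i)]{PanYak08} then gives $\gr\mc{A}_{e,\chi}=\bar{\mc{A}}_{e,\chi}$ in both inclusions (you handle only one direction), and your maximality argument is correct as stated. In short: the $W$-algebra is used to prove a chiral analogue of Theorem~\ref{Th:PPY} for $V^{\kappa_{e,c}}(\fing^e)$, and it is from that vertex algebra---not from the $W$-algebra---that the quantization is extracted.
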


If $e=0$ then $\fing^e=\fing$ and Theorem~\ref{Th:Main} becomes a classical result of Lie Theory. In this case, it has been proved by Tarasov \cite{Tar00} 
for $\fing=\mf{sl}_n$, 
by
Nazarov--Olshanski \cite{NazOls96} for classical Lie algebras, and 
by Rybnikov \cite{Ryb06},
Chervov--Falqui--Rybnikov \cite{CheFalRyb10}
and 
Feigin--Frenkel--Toledano-Laredo
\cite{FeiFreTol10} for any finite dimensional simple Lie algebra $\fing$.
Related results have also been obtained by Cherednik
\cite{Che}.

If $\mf{a}$ is a finite dimensional Lie algebra over $\C$ then the problem of quantizing 
Mishchenko--Fomenko subalgebras of $S(\mf{a})$, i.e. lifting them to commutative subalgebras of $U(\mf{a})$, was first raised in \cite{Vin91} and is sometimes referred to as {\it Vinberg's problem}. In this terminology, our paper shows that Vinberg's problem has a positive solution for
a large class of centralizers in simple Lie algebras.
It is worth remarking that Vinberg's
problem is wide open for an arbitrary Lie algebra $\mf{a}$.

In Section~\ref{sec:codim} we show that all conditions of
Theorem~\ref{Th:Main} hold in the case where $e$ is an element of the minimal nonzero nilpotent orbit $\mathcal{O}_{\rm min}\subset\mf{g}$ and $\mf{g}$ is not of type ${\rm E}_8$.
In Section \ref{sec:example}
we describe
$\mc{A}_{e,\chi}$ explicitly in the case where $e\in\mathcal{O}_{\rm min}$ and $\mf{g}$ is of type ${\rm A}_\ell$ with $\ell\in\{2,3\}$.

The main step
of the proof
of Theorem \ref{Th:Main}
consists in establishing a chiralization
of Theorem \ref{Th:PPY}.
Namely we prove the 
following result:
let $Z(V^{\kappa_{e,c}}(\fing^e))$ be the center of 
 the universal affine vertex algebra $V^{\kappa_{e,c}}(\fing^e)$
associated with $\fing^e$ at the critical level $\kappa_{e,c}$; see
Section \ref{section:main} for more detail.
There exists a natural filtration
of the vertex algebra $V^{\kappa_{e,c}}(\fing^e)$
such that $\gr V^{\kappa_{e,c}}(\fing^e)\cong 
S(\affg^e_-)$,
where $\affg^e_-=\fing^e[t^{-1}]t^{-1}:=\fing^e\otimes_\C t^{-1}\C[t^{-1}]$.
Given $a\in V^{\kappa_{e,c}}(\fing^e)$ we denote by $\sigma(a)$ 
its symbol in $S(\affg^e_-)$.
We identify $S(\fing^e)$ with a subring of $S(\affg^e_-)$ 
by using the algebra homomorphism which sends any $x\in\fing^e$ to
$xt^{-1}\in \affg^e_-$.
 \begin{Th}\label{Th:FF-theorem-for-centralizer}
Let $\{P_1,\dots, P_\ell\}$ be 
 a good generating system of $S(\fing)^{\fing}$  for a nilpotent element $e\in\fing$ and suppose that $(\fing^e)^*_{\on{sing}}$ has codimension $\geq 2$ in $(\fing^e)^*$.
Then there exist homogeneous elements
${}^{e\!}\hat P_1,\dots, {}^{e\!}\hat P_{\ell}$
in $Z(V^{\kappa_{e,c}}(\fing^e))$ 
such that
$\sigma(\hat P_i)={}^{e\!}P_i\in S(\fing^e)
\subset S(\affg^e_-)$,
and
$Z(V^{\kappa_{e,c}}(\fing^e))$ is a polynomial  algebra 
in variables
 $T^{j}({}^{e\!}\hat P_i)$ with 
  $i=1,\dots,\ell$ and $j\ge 0$,
where $T$ is the translation operator of 
$V^{\kappa_{e,c}}(\fing^e)$.
 \end{Th}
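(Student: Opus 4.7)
The plan is to split the argument into a classical structural result on arc-space invariants and a quantum construction of central lifts via the affine $W$-algebra. The universal vertex algebra $V^{\kappa_{e,c}}(\fing^e)$ carries a canonical PBW-type (Li) filtration whose associated graded vertex algebra is identified with the commutative $S(\affg^e_-)$ together with the derivation induced by $T$, acting as $xt^{-n}\mapsto n\,xt^{-n-1}$. Under this identification the image of $\gr Z(V^{\kappa_{e,c}}(\fing^e))$ lands in the ring of chiral invariants $S(\affg^e_-)^{\fing^e[t]}$. Hence the theorem reduces to two tasks: (i) identifying this invariant ring with the polynomial algebra $\C\bigl[T^{j}({}^{e\!}P_i) : 1\leq i\leq \ell,\; j\geq 0\bigr]$, and (ii) producing honest central elements ${}^{e\!}\hat P_i\in Z(V^{\kappa_{e,c}}(\fing^e))$ whose symbols are the ${}^{e\!}P_i$.

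Step (i) is an arc-space upgrade of Theorem~\ref{Th:PPY}. Writing $X=\Spec S(\fing^e)^{\fing^e}$, the approach is to show that the infinite jet scheme $J_\infty X$ captures all $\fing^e[t]$-invariants of the jet scheme of $(\fing^e)^*$, i.e.\ to match jets of invariants with invariants of jets. The codimension-$\geq 2$ hypothesis on $(\fing^e)^*_{\on{sing}}$ plays the decisive role here: it forces the arc space of the singular locus to have positive codimension in $J_\infty(\fing^e)^*$, whence a Hartogs-type extension identifies $S(\affg^e_-)^{\fing^e[t]}$ with the coordinate ring of $J_\infty X$. Combined with the polynomiality of $S(\fing^e)^{\fing^e}$ from Theorem~\ref{Th:PPY}, the desired polynomial structure follows since the jet algebra of a polynomial ring is itself polynomial, on the $T$-translates of the original generators.

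For step (ii) the affine $W$-algebra enters. The quantum Drinfeld--Sokolov functor $H^\bullet_{DS,f}$ applied to $V^{\kappa_c}(\fing)$ produces the critical-level $W$-algebra $\W^{\kappa_c}(\fing,e)$, into which the classical Feigin--Frenkel generators $\hat P_i\in Z(V^{\kappa_c}(\fing))$ descend as central elements with controlled leading terms. One then relates $V^{\kappa_{e,c}}(\fing^e)$ to $\W^{\kappa_c}(\fing,e)$ through a chiralization of the Premet-style identification of the finite $W$-algebra with a quantization of the Slodowy slice: this should yield a vertex algebra map that transfers the reduced Feigin--Frenkel generators into $Z(V^{\kappa_{e,c}}(\fing^e))$ with symbols exactly equal to the minimal-degree components ${}^{e\!}P_i$ of Theorem~\ref{Th:PPY}. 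The good-generating-system hypothesis is precisely the numerical/degree input which guarantees this matching of symbols.

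Combining (i) and (ii), the conclusion follows by a standard symbol argument: any $z\in Z(V^{\kappa_{e,c}}(\fing^e))$ has symbol in $S(\affg^e_-)^{\fing^e[t]}$, hence by (i) is a polynomial in the $T^j({}^{e\!}P_i)$, and subtracting the corresponding polynomial in $T^j({}^{e\!}\hat P_i)$ reduces filtration degree, with induction finishing the proof. The main obstacle is step (i): lifting a commutative polynomiality result on invariants to its jet-scheme analogue is subtle, and the codimension-$2$ assumption is precisely the hypothesis required to make the Hartogs-type extension go through. The construction in step (ii), while technically involved, sits within the well-developed framework of critical-level Feigin--Frenkel for affine $W$-algebras.
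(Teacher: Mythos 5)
Your two-step architecture matches the paper's structure in outline, but both steps contain conceptual inaccuracies that would prevent the proof from going through as written.

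In step (i), the set whose codimension controls the argument is \emph{not} the arc space of $(\fing^e)^*_{\on{sing}}$; it is the Jacobian locus of the functions $T^j({}^{e\!}P_i)$ on the jet scheme, and these two sets are genuinely different. The paper proceeds through finite truncations $\finq_m=\fing^e\otimes\C[t]/(t^m)$ and invokes Ra\"is--Tauvel (\cite[Lemma~3.3(i) and 1.6]{RaTau92}), which shows that both the singular locus of $\finq_m^*$ and the Jacobian locus of the $(^{e\!}P_i)_j$ are characterized solely by the \emph{top} component $\psi_{m-1}$; hence they have codimension $\ge 2$ in $\finq_m^*$ precisely because $(\fing^e)^*_{\on{sing}}$ does in $(\fing^e)^*$. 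One then applies the extended Skryabin criterion (\cite[Theorem~1.1]{PanPreYak06}) at each finite level and passes to the limit. Your ``Hartogs on the arc space'' heuristic, besides invoking the wrong locus, is not a statement one can apply directly in the infinite-dimensional setting; the paper reduces everything to finite jet schemes precisely to sidestep this.

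In step (ii), you posit a ``vertex algebra map'' from $\W^{\kappa_c}(\fing,e)$ to $V^{\kappa_{e,c}}(\fing^e)$. No such morphism exists. The relationship is via the Kac--Wakimoto filtration $F_\bullet$ on $\W^{\kappa_c}(\fing,e)$, with $V^{\kappa_{e,c}}(\fing^e)$ arising as $\gr_F\W^{\kappa_c}(\fing,e)$ (Theorem~\ref{Th:loop-filtration}). The transfer of central elements is therefore not via a homomorphism but via the $F$-symbol map $\sigma_F$, which carries $Z(\W^{\kappa_c}(\fing,e))$ into $Z(V^{\kappa_{e,c}}(\fing^e))$. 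The crucial identity $\sigma({}^{e\!}\widehat{P_i})={}^{e\!}P_i$ is then obtained from the commutativity of the $F$-filtration with the Li filtration, equation \eqref{eq:two-filtration-commutes}, combined with the fact that for $a\in\C[\mc{S}_e]$ the $F$-symbol coincides with the initial term of $a$ in the sense of \cite{PanPreYak06}; the ``good generating system'' hypothesis is not what secures this symbol matching (it enters only in step (i), as an input to Theorem~\ref{Th:PPY}). Your invocation of ``a Premet-style identification of the finite $W$-algebra with a quantization of the Slodowy slice'' is also misplaced: the relevant input is Theorem~\ref{Th:center-of-W} (the Feigin--Frenkel center maps isomorphically onto $Z(\W^{\kappa_c}(\fing,e))$), together with the Kac--Wakimoto filtration, not a finite $W$-algebra comparison.
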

Having established  Theorem \ref{Th:FF-theorem-for-centralizer} we  deduce
 Theorem \ref{Th:Main}  by
 applying 
 the method which
 Rybnikov \cite{Ryb06} used in the case where $e=0$. It should be mentioned that in this case 
Theorem \ref{Th:FF-theorem-for-centralizer}
is a celebrated result of Feigin and Frenkel \cite{FeiFre92}.
In order to prove Theorem \ref{Th:FF-theorem-for-centralizer} in our situation we use the affine 
$W$-algebra 
$\W^{\kappa_c}(\fing,f)$ associated with $(\fing,f)$ at the
critical level $\on{\kappa_{c}}$.

\noindent
{\bf Acknowledgement.} The work of this paper has started during the second author's visit to Kyoto University. He would like to thank RIMS (Kyoto) for its hospitality and excellent working conditions. The authors are grateful to the anonymous referee for pointing out a gap in an earlier version of this paper and important suggestions.
\section{Rybnikov's construction and vertex algebras}\label{sec1}
Let $\finq$ be a finite dimensional Lie algebra over $\C$ and let
$\kappa(\,\cdot\,,\,\cdot\,)$ be a symmetric invariant bilinear form of 
$\finq$.
The associated affine Kac-Moody algebra
$\affq_{\kappa}$  is a central extension of the
Lie algebra $\finq((t))=\finq\* \C(\!(t)\!)$ by the one-dimensional center
$\C \mathbf{1}$:
\begin{align*}
 0\ra \C \mathbf{1}\ra \affq_{\kappa}\ra \finq(\!(t)\!)\ra 0.
\end{align*}
The commutation relations in $\affq_{\kappa}$ are given by
\begin{align*}
 [x\* f,y\* g]\,:=\,[x,y]\* fg -\kappa(x,y)\on{Res}_{t=0}(f dg) \mathbf{1}
\end{align*}
for all $x,y\in \finq$ and
$f,g\in \C(\!(t)\!)$.
Set 
\begin{align*}
\affq_+=
\finq[[t]]\+ \C \mathbf{1},\quad
\affq_-=\finq[t^{-1}]t^{-1}\subset \affq_{\kappa},
\end{align*}
so that
 $\affq_{\kappa}=\affq_-\+ \affq_+$.

We regard $S(\finq)$ as a  subalgebra
of 
$S(\affq_-)$ via the embedding $x\mapsto xt^{-1}$,
 $x\in \finq$.
We also identify $S(\affq_-)$ with the function algebra $\C[\finq_{\infty}^*]$  on the
infinite jet scheme $\finq_{\infty}^*$
 of the affine space
 $\finq^*$.
Then our embedding $S(\finq)\hookrightarrow S(\affq_-)$
gives rise to an 
inclusion
$\C[\finq^*]\hookrightarrow\C[\finq_{\infty}^*]$.
The Poisson algebra structure of $S(\finq)=\C[\finq^*]$ gives rise
\cite{Ara12}
to a
Poisson
{\it vertex} algbera structure on
$S(\affq_-)=\C[\finq_\infty^*]$.
The translation operator $T$ 
on the Poisson vertex  algebra
$S(\affq_-)$ is the derivation
of the $\C$-algebra $S(\affg_-)$
defined by the rule
\begin{align*}
 T x_{(-m)}=m x_{(-m-1)},
\quad T1=0,
\end{align*}
where $x_{(m)}=xt^m$,
$x\in \finq$.
Let
$Z(S(\affq_-))$
be the center
of the Poisson vertex algebra
$S(\affq_-)$.
Identifying  $\affq_-$ with $\affq/\affq_+$ we have that
\begin{align*}
Z(S(\affq_-))\,=\,
S(\affq_-)^{\affq_+}.
\end{align*}

For $\chi\in \finq^*$
and $u\in \C^*$, we
define a ring homomorphism
\begin{align*}
\bar {\Phi}_{\chi}(?,u): S(\affq_-)\ra 
S(\finq),
\quad 
\end{align*}
by setting
\begin{align*}
\bar \Phi_{\chi}( x_{(-m)},u)= u^{-m}x+\delta_{m,1}\chi(x)
 \quad\text{for all $x\in \finq$ and $m\in \N$.}
\end{align*}
This definition implies that
\begin{align}
\bar \Phi_{\chi}(P,u)(\lam)=u^{-\deg P}P(\lam+u\chi)
\label{eq:bar-derivation}
\end{align}
for any homogeneous element $P\in S(\finq)\subset S(\affq_-)$ and any
$\lam\in \finq^*$. Furthermore,
\begin{align}\label{TT-shift}
 \bar \Phi_{\chi}(T a, u)=-\frac{d}{du}\bar \Phi_{\chi}(a,u)
\end{align}
for all $a\in S(\affq_-)$.
We denote by
$\bar \Phi_{\chi,n}(a)\in S(\finq)$ the coefficient of
$u^{-n}$ in $\bar \Phi_{\chi}(a,u)$,
so that 
\begin{align*}
 \bar \Phi_{\chi}(a,u)=\sum_{n\geq 0}\bar \Phi_{\chi,n}(a)u^{-n}.
\end{align*}
For a homogeneous element $P$ of $S(\fing)$ we have that
\begin{align}
 \bar \Phi_{\chi,n}(P)=\begin{cases}
			\frac{u^{\deg P-n}}{(\deg P-n)!}D_{\chi}^{\deg
			P-n}
			(P)& \mbox{ if}\ n\leq \deg P;\\ 0&\ \text{else}.
		       \end{cases}
\label{eq:formula-for-bar-P}
\end{align}

\begin{Lem}[\cite{Ryb06,FeiFreTol10}]
\label{Lem:independent1}
The following are true:

\begin{enumerate}
 \item   For any $T$-invariant subspace $U$  of  $S(\affq_-)$ 
the image $\Phi_{\chi}(U,u)$ is independent of the choice of $u\in \C^*$
 and 
spanned by the elements $\Phi_{\chi,n}(a)$ with $a\in U$.

\smallskip

\item  Let $A$ be a subalgebra of $S(\finq)^{\finq}$ and let
$A_{\infty}$ be the ($T$-invariant) subalgebra of $S(\affq_-)$ generated  by
$A$.
Then
$\bar \Phi_{\chi}(A_{\infty})$ is the 
Mishchenko-Fomenko subalgebra 
of $S(\finq)$ generated by 
the $\chi$-shifts of $P$
for all $P\in A$.
\end{enumerate}
\end{Lem}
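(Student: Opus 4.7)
The plan is to establish (1) from the differential relation (\ref{TT-shift}) together with $T$-invariance, and then to deduce (2) by combining (1) with the fact that $\bar\Phi_\chi(-,u)\colon S(\affq_-)\to S(\finq)$ is a ring homomorphism for each fixed $u\in\C^*$. For (1) I would work element by element: fix $a\in U$, and note that since $a$ involves only finitely many factors $x_{(-m)}$, the expansion $\bar\Phi_\chi(a,u)=\sum_{n\ge 0}\bar\Phi_{\chi,n}(a)u^{-n}$ is a finite sum, so the containment $\bar\Phi_\chi(U,u)\subseteq\on{span}_\C\{\bar\Phi_{\chi,n}(b):b\in U,\,n\ge 0\}$ is immediate. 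For the reverse inclusion I would iterate (\ref{TT-shift}): by $T$-invariance, every $T^k a$ lies in $U$, and
\begin{align*}
\bar\Phi_\chi(T^k a,u)\;=\;(-1)^k\frac{d^k}{du^k}\bar\Phi_\chi(a,u)\;=\;\sum_{n\ge 0}n(n+1)\cdots(n+k-1)\,\bar\Phi_{\chi,n}(a)\,u^{-n-k}
\end{align*}
belongs to $\bar\Phi_\chi(U,u)$ for every $k\ge 0$. Letting $k$ range over $0,1,\dots,N$, where $N$ bounds the $u^{-1}$-degree of $\bar\Phi_\chi(a,u)$, yields a finite linear system whose coefficient matrix is block triangular: the $n=0$ column is $(1,0,\dots,0)^{\mathrm T}$, and on the $n\ge 1$ block one sees, after absorbing invertible powers of $u$ from rows and columns, the evaluation matrix $(n(n+1)\cdots(n+k-1))_{1\le k,n\le N}$. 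The latter is non-singular because the rising factorials form a basis of the polynomials in $n$ vanishing at $0$, so evaluation at $n=1,\dots,N$ is an isomorphism. Back-substitution then recovers each $\bar\Phi_{\chi,n}(a)$ inside $\bar\Phi_\chi(U,u)$, and since the resulting spanning description is visibly independent of $u$, both assertions of (1) follow.

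For (2), the ring-homomorphism property means $\bar\Phi_\chi(A_\infty,u)$ is the $\C$-subalgebra of $S(\finq)$ generated by the images $\bar\Phi_\chi(T^k P,u)$ with $P\in A$ and $k\ge 0$. Combining (\ref{TT-shift}) with (\ref{eq:formula-for-bar-P}), each such generator is a nonzero scalar multiple of some $\chi$-shift $D_\chi^j P$ with $P\in A$. Conversely, (\ref{eq:formula-for-bar-P}) together with part (1) shows that every $D_\chi^j P$ appears, up to normalization, as one of the coefficients $\bar\Phi_{\chi,n}(P)\in\bar\Phi_\chi(A_\infty)$. The two subalgebras of $S(\finq)$ therefore coincide, and by definition this common subalgebra is the Mishchenko--Fomenko subalgebra generated by the $\chi$-shifts of all $P\in A$.

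The only genuinely delicate ingredient is the invertibility of the rising-factorial matrix in (1): the matrix $(n(n+1)\cdots(n+k-1)u^{-n-k})$ is not triangular in $(k,n)$, but becomes so once the vanishing $n=0$ column (reflecting the fact that $T$ annihilates constants) is isolated. Beyond that, the lemma is a direct unravelling of the definitions together with the derivation rule (\ref{TT-shift}) and the algebra-homomorphism property of $\bar\Phi_\chi(-,u)$.
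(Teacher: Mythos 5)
The paper gives no proof of this lemma; it is attributed to \cite{Ryb06,FeiFreTol10}. Your argument is a correct, self-contained derivation, and it is in spirit the same argument one finds in those references: part (1) is the observation that for a Laurent polynomial $p(u^{-1})=\bar\Phi_\chi(a,u)$ of bounded degree, the linear span of the coefficients $\bar\Phi_{\chi,n}(a)$ coincides with the span of the derivatives $\partial_u^k p$ at any fixed $u_0\neq 0$, and by \eqref{TT-shift} these derivatives are realized (up to sign) as images of the $T^k a\in U$. Your explicit inversion of the rising-factorial matrix $\bigl(n^{(k)}\bigr)_{1\le k,n\le N}$, after isolating the $n=0$ column, is a valid way to make this precise.

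One small inaccuracy in part (2): you write that each generator $\bar\Phi_\chi(T^kP,u)$ is ``a nonzero scalar multiple of some $\chi$-shift $D_\chi^jP$.'' That is not quite right. From the computation
\begin{align*}
\bar\Phi_\chi(T^kP,u)\;=\;(-1)^k\partial_u^k\bar\Phi_\chi(P,u)\;=\;\sum_{n\ge 1}\frac{n^{(k)}}{(\deg P-n)!}\,u^{-n-k}\,D_\chi^{\deg P-n}(P),
\end{align*}
one sees it is a $\C$-linear \emph{combination} of several $\chi$-shifts, not a multiple of a single one. This does not damage your argument, since a linear combination of $\chi$-shifts still lies in the Mishchenko--Fomenko subalgebra; combined with the containment from part (1) (which gives each $D_\chi^jP$, up to the scalar $1/(\deg P - j)!$, as a $\bar\Phi_{\chi,n}(P)\in\bar\Phi_\chi(A_\infty)$), the two algebras coincide as you claim. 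Simply replace ``scalar multiple'' by ``$\C$-linear combination'' and the proof is complete.
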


Let
\begin{align*}
V^\kappa(\finq)
=U(\affq_{\kappa})\*_{U(\affq_+)}\C,
\end{align*}
where 
$\C$ is considered as a 
 $\affq_+$-module on which 
$\finq[[t]]$ acts trivially and 
 $\mathbf{1}$ acts as the
identity.
Note that
\begin{align*}
V^{\kappa}(\finq)\cong U(\affq_-)
\end{align*}
as vector spaces.
As is well-known (see \cite{Kac98,FreBen04}),
$V^{\kappa}(\finq)$ is naturally a vertex algebra.
The  state-field correspondence
\begin{align*}
 V^{\kappa}(\finq)
\longrightarrow\, \End V^{\kappa}(\finq)[[z,z^{-1}]],
\quad a\mapsto a(z)=\sum_{n\in\Z }a_{(n)}z^{-n-1},
\end{align*}
is uniquely determined by the conditions
\begin{align*}
 (\mathbb{I})(z)=\on{id}_{V^{\kappa}(\finq)},\quad 
(xt^{-1}\mathbb{I})(z)=x(z):=\sum_{n\in \Z} x_{(n)} z^{-n-1} \text{ for }x\in \finq,
\end{align*}
where $\mathbb{I}=1\* 1\in V^{\kappa}(\finq)$ and $x_{(n)}=xt^n$.
The translation operator
$T$ is an endomorphism of $V^{\kappa}(\finq)$
such that
$T\mathbb{I}=0$ and
$[T, x(z)]=\frac{d}{dz}x(z)$
for all $x\in \fing$.

Given a  vertex algebra $V$
we write  $V=E^0V\supset E^1V\supset E^2V\supset \dots $ for the {\it Li filtration} of $V$; see \cite{Li05} and \cite{Ara12}. Recall that $E^pV^{\kappa}(\finq)$ is spanned by all
$$(x_1t^{-n_1})\cdots (x_rt^{-n_r})\mathbb{I}\ \mbox{ with  }\ x_i\in\finq\ \mbox{ and }\, 
-r+\sum_{i=1}^r n_i\ge p.$$ 
The associated graded space
$\gr V=\bigoplus_p E^pV /E^{p+1}V$ is 
naturally a Poisson vertex algebra.
The Poisson vertex algebra structure on $\gr V$
induces a genuine Poisson algebra structure on Zhu's $C_2$-algebra $R_V:=V/E^1 V\subset \gr V$. Moreover, $R_V$ generates
$\gr V$ as a differential graded algebra; see \cite{Li05}.

Setting $\on{deg}xt^{-n}=n$ for all $x\in\finq$
and $n\ge 0$ gives
$U(\affq_-)=\bigoplus_{j\in \Z_{\geq 0}}U(\affq_-)[j]$ a graded algebra structure.
This, in turn, induces a grading on 
$V^{\kappa}(\mf{q})$:
\begin{align}
V^{\kappa}(\finq)=\bigoplus\limits_{j\in \Z_{\geq 0}}V^{\kappa}(\finq)[j],
\label{eq:grading}
\end{align}
where $V^{\kappa}(\finq)[j]=(U(\affq_-)[j])\mathbb{I}$.
Let 
$\{U_i(\affq_-)\,|\,\, i\ge 0\}$ be the 
canonical filtration
of $U(\affq)$ and put
$U_i(\affq_-)[j]=U_i(\affq_-)\cap U(\affq_-)[j]$.
The above then shows that
$$(E^pV^{\kappa}(\finq))[j]=(U_{j-p}(\affq_-)[j])\mathbb{I},$$
where
$(E^pV^{\kappa}(\finq))[j]=E^p V^{\kappa}(\finq)\cap V^{\kappa}(\finq)[j]$.
It follows that
\begin{align}
\gr V^{\kappa}(\mf{q})\cong S(\widehat{\mf{q}}_-)\ \,\,
\mbox{ and }\ \, 
R_{V^\kappa(\finq)}\cong S(\mf{q}).
\label{eq:case-of-affine}
\end{align}
More precisely, $R_{V^\kappa(\finq)}$ coincides with the subalgebra of $S(\widehat{\mf{q}}_-)$ generated by all
$xt^{-1}$ with $x\in\finq$.

Given a vertex algebra
$V$ we denote by
$Z(V)$ the {\it center} of $V$, so that
\begin{eqnarray*}
Z(V)&=& \{z\in V\,|\,\, [z_{(m)}, a_{(n)}]=0
\ \text{for all }a\in V \text { and } m,n\in \Z\}
\\
&=&\{z\in V\,|\,\, a_{(n)}z=0\text{ for all }a\in V \text{ and } n\geq 0\}.
\end{eqnarray*}
Here the last equality follows from
 the commutator formula
\begin{align*}
 [a_{(m)},b_{(n)}]=\sum_{i\geq 0}\begin{pmatrix}
				  m\\ i
				 \end{pmatrix}(a_{(i)}b)_{(m+n-i)}.
\end{align*}
Since $Z(V)$ is a {\it commutative} vertex subalgebra of $V$, it is a differential algebra, i.e. a unital commutative $\C$-algebra equipped with a
derivation; see \cite{Bor86}. Specifically, the
multiplication in $Z(V)$ is given by $a\cdot b:=a_{(-1)}b$ for all $a,b\in Z(A)$. The translation operator of $V$ preserves $Z(A)$ and acts on this algebra as a derivation.

When
$V=V^{\kappa}(\finq)$
we have that
\begin{align*}
 Z(V^{\kappa}(\finq))
=
V^{\kappa}(\finq)^{\affq_+}\cong\, \End_{\affq_{\kappa}}(V^{\kappa}(\finq))^{\on{opp}}
\end{align*}
as $\C$-algebras.
We will often regard
$ Z(V^{\kappa}(\finq))$ as a commutative subalgebra 
of $U(\affq_-)$ 
via the above-mentioned identification $V^{\kappa}(\finq)=U(\affq_-)$.

Given $\chi\in \finq^*$
and $u\in \C^*$ we
define an algebra homomorphism
\begin{align*}
\Phi_{\chi}(?, u):  U(\affq_-)\ra U(\finq),\quad 
a\mapsto \Phi_{\chi}(a,u),
\end{align*}
by the rule
\begin{align*}
\Phi_{\chi}(x_{(-m)},u)= u^{-m}x +\delta_{m,1}\chi(x)
\quad
\text{for all }
 x\in \finq \,\text{ and } m\in \Z.
\end{align*}
It is immediate from the definition  that
\begin{align}\label{T-shift}
 \Phi_{\chi}(Ta,u)=-\frac{d}{d u}\Phi_{\chi}(a,u)
\end{align}
for all $a\in V^\kappa(\finq)=U(\affq_-)$.
We denote by 
$ \Phi_{\chi,n}(a)\in U(\finq)$ the coefficient of
$u^{-n}$ in $ \Phi_{\chi}(a,u)$,
so that 
\begin{align*}
\Phi_{\chi}(a,u)=\sum_{n\geq 0}\Phi_{\chi,n}(a)u^{-n}.
\end{align*}
The following result can be found in \cite[Sect.~2.5]{FeiFreTol10}.
\begin{Lem}\label{Lem:FFTL}
 For any $T$-invariant subspace $U$  of  $U(\affq_-)$ 
the image $\Phi_{\chi}(U)$ is independent of the choice of $u\in \C^*$
 and 
spanned by $\Phi_{\chi,n}(a)$ with $a\in U$.
\end{Lem}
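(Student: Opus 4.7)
The plan is to establish both inclusions $\Phi_\chi(U,u_0)\subset W$ and $W\subset\Phi_\chi(U,u_0)$ for an arbitrary fixed $u_0\in\C^*$, where
\[
W:=\mathrm{span}_\C\bigl\{\Phi_{\chi,n}(a)\,\big|\,a\in U,\ n\ge 0\bigr\};
\]
independence of $\Phi_\chi(U,u_0)$ from the choice of $u_0$ then follows automatically since $W$ is manifestly defined without reference to $u_0$. The easy inclusion $\Phi_\chi(U,u_0)\subset W$ comes down to noting that, because $\Phi_\chi(?,u)$ is an algebra homomorphism and each generator $x_{(-m)}$ is sent to the polynomial $u^{-m}x+\delta_{m,1}\chi(x)$ in the variable $u^{-1}$, the element $\Phi_\chi(a,u)$ is in all cases a polynomial in $u^{-1}$. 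Hence the defining expansion $\Phi_\chi(a,u)=\sum_{n\ge 0}\Phi_{\chi,n}(a)u^{-n}$ is a finite sum, and $\Phi_\chi(a,u_0)$ is a $\C$-linear combination of the $\Phi_{\chi,n}(a)$'s.

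The serious direction uses the $T$-invariance of $U$ and the identity (\ref{T-shift}). Iterating (\ref{T-shift}) yields $\Phi_\chi(T^k a,u)=(-1)^k\frac{d^k}{du^k}\Phi_\chi(a,u)$ for all $k\ge 0$. Writing $\Phi_\chi(a,u)=\sum_{n=0}^N \Phi_{\chi,n}(a)u^{-n}$ and using $(-1)^k\frac{d^k}{du^k}u^{-n}=n(n+1)\cdots(n+k-1)u^{-n-k}$, I obtain, for each $k\ge 1$,
\[
u_0^{k}\,\Phi_\chi(T^k a,u_0)\,=\,\sum_{n=1}^{N}n(n+1)\cdots(n+k-1)\,u_0^{-n}\Phi_{\chi,n}(a),
\]
together with the $k=0$ equation $\Phi_\chi(a,u_0)=\Phi_{\chi,0}(a)+\sum_{n=1}^N u_0^{-n}\Phi_{\chi,n}(a)$. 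Since $U$ is $T$-invariant, $T^k a\in U$ and all left-hand sides lie in $\Phi_\chi(U,u_0)$.

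It then remains to invert this $(N{+}1)\times(N{+}1)$ linear system for the $N{+}1$ unknowns $\Phi_{\chi,0}(a)$ and $u_0^{-n}\Phi_{\chi,n}(a)$ ($n=1,\dots,N$). The coefficient matrix is block lower-triangular with upper-left $1\times 1$ block equal to $1$, so invertibility reduces to invertibility of the $N\times N$ submatrix with entries $n(n+1)\cdots(n+k-1)$, indexed by $k,n\in\{1,\dots,N\}$; this is the matrix of evaluations at $n=1,\dots,N$ of the polynomials $p_k(t)=t(t+1)\cdots(t+k-1)$, which have strictly increasing degrees $1,2,\dots,N$, form a basis of $t\C[t]_{\le N}$, and therefore give an invertible evaluation matrix by a standard Vandermonde-type argument. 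This expresses each $\Phi_{\chi,n}(a)$ as a $\C$-linear combination of elements $\Phi_\chi(T^k a,u_0)\in\Phi_\chi(U,u_0)$, proving $W\subset\Phi_\chi(U,u_0)$. The only genuine content of the proof is the Vandermonde-type invertibility in this last step; once granted, both assertions of the lemma follow simultaneously, and the same scheme proves the Poisson analogue Lemma \ref{Lem:independent1}(1).
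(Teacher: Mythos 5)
The paper does not prove this lemma; it simply cites \cite[Sect.~2.5]{FeiFreTol10}, so there is no in-text argument to compare against. Your proof is correct and self-contained: the inclusion $\Phi_\chi(U,u_0)\subseteq W$ is immediate from polynomiality of $u\mapsto\Phi_\chi(a,u)$ in $u^{-1}$, and the reverse inclusion follows by iterating \eqref{T-shift} to get $u_0^{k}\Phi_\chi(T^ka,u_0)=\sum_{n\ge 1} n(n+1)\cdots(n+k-1)\,u_0^{-n}\Phi_{\chi,n}(a)$ and inverting the resulting linear system, the key point being that the rising-factorial matrix $\bigl(p_k(n)\bigr)_{1\le k,n\le N}$ is invertible because the $p_k$ form a basis of $t\C[t]_{\le N}$ and a polynomial of degree $\le N$ vanishing at $0,1,\dots,N$ must be zero. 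One cosmetic slip: the bordered $(N{+}1)\times(N{+}1)$ matrix is block \emph{upper}-triangular (its first column is $(1,0,\dots,0)^{\mathrm{T}}$ and its first row is all ones), not lower-triangular, though the determinant factors the same way either by expanding along that column.
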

We let 
$\mc{A}_{\chi}$ denote
the image of the subalgebra $Z(V^{\kappa}(\finq))$ of $U(\affq_-)$ 
under the homomorphism $\Phi(?,u)$. This is a commutative $\C$-subalgebra of $U(\finq)$. 
\begin{Lem}\label{Lem:symbol}
Let $a\in V^{\kappa}(\finq)[d]$ be such that its image  $\bar{a}$ 
in $R_{V^\kappa(\finq)} \cong S(\finq)$ has the property that $D_\chi^{d-i}\big(\bar{a})\ne 0$ for some $i\le d$.  Then
the symbol of $\Phi_{\chi,i}(a)$ in $S(\finq)={\rm gr}\,U(\finq)$ 
equals $\bar{\Phi}_{\chi,i}(\bar{a})$. 
\end{Lem}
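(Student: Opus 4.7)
The strategy is to exploit the identification $V^{\kappa}(\finq)=U(\affq_-)$ together with the explicit description $E^pV^\kappa(\finq)\cap V^\kappa(\finq)[d]=\bigl(U_{d-p}(\affq_-)[d]\bigr)\mathbb{I}$ recalled in Section~\ref{sec1}. Fix a PBW-ordered basis of $\finq$. Since $a\in V^\kappa(\finq)[d]$, write $a=X\mathbb{I}$ with $X\in U(\affq_-)[d]$ and decompose $X=Y+X'$, where $Y$ is a $\C$-linear combination of ordered monomials $(x_{i_1}t^{-1})\cdots(x_{i_d}t^{-1})$ of length exactly $d$ (all factors of internal degree one) and $X'\in U_{d-1}(\affq_-)[d]$. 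By construction $Y\mathbb{I}$ represents $\bar a$ under the isomorphism $R_{V^\kappa(\finq)}\cong S(\finq)$, so it remains to analyse $\Phi_{\chi,i}(Y)+\Phi_{\chi,i}(X')$ modulo the filtration piece $U_{i-1}(\finq)$.

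The contribution of $Y$ is a substitution. Applying $\Phi_\chi(?,u)$ to an ordered monomial $(x_{i_1}t^{-1})\cdots(x_{i_d}t^{-1})$ in $Y$ replaces each $x_{i_j}t^{-1}$ by $u^{-1}x_{i_j}+\chi(x_{i_j})$; expanding and collecting the $u^{-i}$ coefficient gives a sum over subsets $S\subseteq\{1,\dots,d\}$ with $|S|=i$ of an ordered product in $U(\finq)$ containing exactly $i$ factors from $\finq$ together with the scalar $\prod_{j\notin S}\chi(x_{i_j})$, and so lies in $U_i(\finq)$. Passing to the symbol in $S^i(\finq)\subset\gr U(\finq)$ kills the ordering, and comparing with formula~(\ref{eq:formula-for-bar-P}) one sees that the symbol of $\Phi_{\chi,i}(Y)$ coincides with $\bar\Phi_{\chi,i}(\bar a)=\tfrac{1}{(d-i)!}D_\chi^{d-i}(\bar a)$, which is nonzero by hypothesis.

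The main obstacle is to check that $\Phi_{\chi,i}(X')$ lies in $U_{i-1}(\finq)$, i.e.\ has strictly lower filtration degree than $i$. Take a typical monomial $(y_1 t^{-m_1})\cdots(y_k t^{-m_k})$ appearing in $X'$, with $k\le d-1$ and $\sum_j m_j=d$. Under $\Phi_\chi(?,u)$ it becomes $\prod_j\bigl(u^{-m_j}y_j+\delta_{m_j,1}\chi(y_j)\bigr)$, and a summand contributing to the coefficient of $u^{-i}$ is parametrised by a subset $S\subseteq\{1,\dots,k\}$ of cardinality $s$ for which one selects the first term, subject to $\sum_{j\in S}m_j=i$ and $m_j=1$ for all $j\notin S$. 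The second condition forces $\sum_{j\notin S}m_j=k-s$, hence $k=d-i+s$, and combining with $k\le d-1$ yields $s\le i-1$. So every monomial contributing to $\Phi_{\chi,i}(X')$ carries at most $i-1$ factors from $\finq$, which gives $\Phi_{\chi,i}(X')\in U_{i-1}(\finq)$ as required.

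Combining the two steps, $\Phi_{\chi,i}(a)$ lies in $U_i(\finq)$ and its principal symbol in $\gr U(\finq)$ at filtration level $i$ equals $\bar\Phi_{\chi,i}(\bar a)$. The second paragraph is essentially a direct PBW computation; the genuine content of the argument is the elementary combinatorial inequality $k=d-i+s\le d-1$ of the third paragraph, which is precisely what prevents the ``Li-filtration defect'' $X'$ of the representative $X$ from contributing to the principal symbol of $\Phi_{\chi,i}(a)$.
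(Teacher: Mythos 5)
Your proof is correct, and it is essentially the argument that the paper refers to: the authors simply cite \cite[Lemma~3.13]{FeiFreTol10} and note that the proof there goes through unchanged, and that proof is precisely a PBW decomposition combined with the combinatorial count you perform. The key content is exactly what you isolate in the third paragraph: for a monomial $(y_1 t^{-m_1})\cdots(y_k t^{-m_k})$ of total internal degree $d$ and PBW length $k\le d-1$, a surviving summand in the $u^{-i}$-coefficient must have $m_j=1$ off the chosen subset $S$, forcing $|S|=k-(d-i)\le i-1$, so the ``defect'' $X'$ contributes only to $U_{i-1}(\finq)$. Together with the observation that the nonvanishing hypothesis $D_\chi^{d-i}(\bar a)\ne 0$ (equivalently $\bar\Phi_{\chi,i}(\bar a)\ne 0$, by \eqref{eq:formula-for-bar-P}) pins the symbol of $\Phi_{\chi,i}(a)$ at filtration level exactly $i$, this gives the statement. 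No gaps.
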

\begin{proof} The statement of the lemma is a reformulation (in a different notation) of \cite[Lemma~3.13]{FeiFreTol10} and the proof in {\it loc.\,cit.} applies in our situation without any changes.
\end{proof}
\section{Affine $W$-algebras, the
Feigin--Frenkel center 
and quantization of Mishchenko--Fomenko subalgebras}
\label{section:main}
Since we are going to apply the methods outlined in 
Section~\ref{sec1} to the case where $\finq=\fing^e$
we shall require an extension of \cite[Th\'eor\`em~4.5(ii)]{RaTau92} valid under our assumptions on the nilpotent element $e\in\fing$. 

We first fix some notation. Let $m$ be a nonnegative integer and write $T$ for the image of $t$ in the truncated polynomial ring $\mc{O}_m:=\C[t]/(t^m)$. Set $\mf{q}:=\mf{g}^e$ and write $\mf{q}_{m}$ for the Lie algebra
$\mf{q}\otimes \mc{O}_m$. As a vector space, $\mf{q}_m=\bigoplus_{i=0}^{m-1}\mf{q}T^i$, 
where $T$ stands for the image of $t$ in $\mc{O}_m$, and 
$[x T^i,yT^j]=[x,y] T^{i+j}$ for all
$x,y\in\mf{q}$ and $i,j\in\Z_{\ge 0}$ (to ease notation we omit all tensor product symbols).
Following the conventions of \cite{RaTau92} we identify  $\mf{q}_m^*=\bigoplus_{i=0}^{m-1}\mf{q}^*T^i$ with the direct product of $m$ copies of $\mf{q}^*$. Specifically, the $m$-tuple
 $(\psi_0,\ldots, \psi_{m-1})$ with $\psi_0,\ldots, \psi_{m-1}\in\mf{q}^*$ identifies with the linear function $\psi$ on $\mf{q}_m$ such that $\psi(x T^i)=\psi_i(x)$ for all $x\in\mf{q}$ and $0\le i\le m-1$.
 
Recall that the {\it index} of a finite dimensional Lie algebra $\mf{a}$, denoted ${\rm ind}\,\mf{a}$,  is defined as the smallest dimension of the coadjoint stabilizers $\mf{a}^\chi$ where $\chi$ runs through the linear functions on $\mf{a}$. We denote by $\mf{a}^*_{\rm{sing}}$ the Zariski closed, conical subset of $\mf{a}^*$ consisting of all $\chi\in\mf{a}^*$ with
$\dim\mf{a}^\chi>{\rm ind}\,\mf{a}$. Since ${\rm ind}\,\finq={\rm ind}\, \fing=\ell$, it follows from
\cite[1.6]{RaTau92} that ${\rm ind}\,\mf{q}_m=m\ell$ and 
$$\psi=(\psi_0,\ldots,\psi_{m-1})\in\mf{q}_m^*\setminus(\mf{q}_m)^*_{\rm sing}  
 \iff \psi_{m-1}\in\mf{q}\setminus(\mf{q}^*)_{\rm sing}.
 $$
Each polynomial function $P\in S(\mf{q})$ on $\mf {g}^*$ gives rise to $m$ polynomial functions  $P_0,\ldots, P_{m-1}\in S(\mf{q}_m)$ on $\mf{q}_m^*$ by the rule
$$P\big({\textstyle \sum}_{i=0}^{m-1}\psi_it^i\big)\,=\,{\textstyle \sum}_{i\ge 0} P_i(\psi_0,\ldots,\psi_{m-1})t^i\qquad \big(\forall\,(\psi_0,\ldots,\psi_{m-1})\in\mf{q}_m^*\big)$$ 
(we ignore the coefficients $P_i(\psi_0,\ldots,\psi_{m-1})$ with $i\ge m$).
It is worth mentioning that if $P\in S^r(\mf{q})$ then $P_i\in S^r(\mf{q}_m)$ for all $0\le i\le m-1$. By \cite[Lemma~3.5]{RaTau92},
all $P_i$ with $0\le i\le m-1$ lie in $S(\mf{q}_m)^{\mf{q}_m}$ whenever $P\in S(\mf{q})^{\mf{q}}$.
\begin{Pro}\label{Pro:inv}
Let 
$\mf{q}=\mf{g}^e$ and suppose the pair $(\mf{g},e)$ satisfies the conditions of Theorem~\ref{Th:PPY}.
If $\{P_1,\ldots, P_\ell\}\subset S(\mf{g})^\fing$ is a good generating system for $e$ then the elements $(^e\!P_i)_j$ with 
$1\le j\le \ell$ and $0\le j\le m-1$ are algebraically independent and generate
 the $\C$-algebra $S(\mf{q}_m)^{\mf{q}_m}$.
\end{Pro}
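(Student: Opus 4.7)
My plan is to verify that $\mf{q}_m$ together with the family $\{({}^{e\!}P_i)_j : 1\le i\le\ell,\ 0\le j\le m-1\}$ meets the hypotheses of the abstract Jacobian/Igusa criterion underlying Theorem~\ref{Th:PPY}: for a finite-dimensional Lie algebra $\mf{a}$ of index $r$ whose singular set has codimension $\geq 2$, any $r$ algebraically independent homogeneous elements $F_1,\ldots,F_r$ of $S(\mf{a})^{\mf{a}}$ with $\sum\deg F_i=(\dim\mf{a}+r)/2$ automatically generate $S(\mf{a})^{\mf{a}}$ as a polynomial ring. The routine ingredients are as follows. Invariance $({}^{e\!}P_i)_j\in S(\mf{q}_m)^{\mf{q}_m}$ is \cite[Lemma~3.5]{RaTau92}; $\on{ind}\,\mf{q}_m=m\ell$ is \cite[1.6]{RaTau92}; and the equivalence $\psi\notin(\mf{q}_m)^*_{\rm sing}\iff\psi_{m-1}\notin(\mf{q}^*)_{\rm sing}$ recalled above forces $(\mf{q}_m)^*_{\rm sing}$ to have codimension in $\mf{q}_m^*$ equal to that of $(\mf{q}^*)_{\rm sing}$ in $\mf{q}^*$, hence $\geq 2$ by hypothesis. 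Since each $({}^{e\!}P_i)_j$ is homogeneous of degree $\deg\,{}^{e\!}P_i$ in the grading on $S(\mf{q}_m)$ that places every $\mf{q}T^k$ in degree one, the good-generating-system condition yields
\[
\sum_{i,j}\deg({}^{e\!}P_i)_j\,=\,m\cdot\frac{\dim\mf{q}+\ell}{2}\,=\,\frac{\dim\mf{q}_m+m\ell}{2}.
\]

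The main computational step is the algebraic independence of the $({}^{e\!}P_i)_j$, which I would establish by a Jacobian calculation. Fix a basis $e_1,\ldots,e_n$ of $\mf{q}$ and use the dual coordinates $\xi^{(i)}_k$ on $\mf{q}_m^*$ defined by $\xi^{(i)}_k(\psi)=\psi_k(e_i)$. Differentiating the defining identity $P(\sum_k\psi_kt^k)=\sum_j P_j(\psi)t^j$ along the direction $e_iT^k$ yields $\partial P_j/\partial\xi^{(i)}_k=(\partial P/\partial x_i)_{j-k}$ for every homogeneous $P\in S(\mf{q})$. I would then evaluate at the point $\psi\in\mf{q}_m^*$ with $\psi_0=\chi_0$ and $\psi_k=0$ for $k\geq 1$, where $\chi_0$ is any point of $\mf{q}^*$ at which $d_{\chi_0}{}^{e\!}P_1,\ldots,d_{\chi_0}{}^{e\!}P_\ell$ are linearly independent; such $\chi_0$ exist because Theorem~\ref{Th:PPY} realises $S(\mf{q})^{\mf{q}}$ as $\C[{}^{e\!}P_1,\ldots,{}^{e\!}P_\ell]$. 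For this $\psi$ one has $Q_j(\psi)=Q(\chi_0)\delta_{j,0}$ for every $Q\in S(\mf{q})$, so
\[
\frac{\partial({}^{e\!}P_{i'})_j}{\partial\xi^{(i)}_k}\bigg|_{\psi}\,=\,\bigg(\frac{\partial\,{}^{e\!}P_{i'}}{\partial x_i}\bigg)(\chi_0)\,\delta_{j,k}.
\]
After grouping rows by $j$ and columns by $k$, the Jacobian matrix of the $({}^{e\!}P_{i'})_j$ at $\psi$ becomes block-diagonal with $m$ identical copies of the Jacobian of ${}^{e\!}P_1,\ldots,{}^{e\!}P_\ell$ at $\chi_0$, which has rank $\ell$. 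The full matrix therefore has rank $m\ell$, so the $({}^{e\!}P_i)_j$ are algebraically independent in $S(\mf{q}_m)$. (Note that $\psi$ is not a regular point of $\mf{q}_m^*$ since $\psi_{m-1}=0$; but this is irrelevant for a generic rank count.)

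With these four ingredients in place, the abstract criterion closes the proof. The principal obstacle I anticipate is justifying that criterion in the required generality, since \cite{PanPreYak06} formulate Theorem~\ref{Th:PPY} only for centralizers, whereas $\mf{q}_m$ is non-reductive. However, the Jacobian-divisor analysis used to prove Theorem~\ref{Th:PPY} in \cite{PanPreYak06}---showing that the vanishing locus of the wedge $dF_1\wedge\cdots\wedge dF_r$ on the regular locus of $\mf{a}^*$ is empty once the degree sum is correct, followed by an Igusa-type lemma---is expressed purely in terms of $(\on{ind}\,\mf{a},\,\mf{a}^*_{\rm sing},\,\{F_i\})$ and applies verbatim to $\mf{a}=\mf{q}_m$; this is really an extension of \cite[Th\'eor\`eme~4.5(ii)]{RaTau92} obtained by substituting Theorem~\ref{Th:PPY} for the Chevalley restriction theorem.
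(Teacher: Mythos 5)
Your proof is correct, but it diverges from the paper's at both main steps. For algebraic independence, you compute the Jacobian at a single well-chosen point and read off full rank from the block structure; the paper instead applies \cite[Lemma~3.3(i)]{RaTau92}, which identifies the \emph{entire} Jacobian locus of the $({}^{e\!}P_i)_j$ in $\mf{q}_m^*$ with $\pi_{m-1}^{-1}\bigl(\mathcal{J}({}^{e\!}P_1,\ldots,{}^{e\!}P_\ell)\bigr)$ where $\pi_{m-1}(\psi)=\psi_{m-1}$, thereby obtaining both algebraic independence \emph{and} the codimension-$\geq 2$ bound on that locus in one stroke. To close the proof you invoke the degree-sum criterion (singular locus of codimension $\geq 2$ plus $\sum\deg F_i=(\dim\mf{a}+\on{ind}\mf{a})/2$ forces $S(\mf{a})^{\mf{a}}=\C[F_1,\ldots,F_r]$); this is a valid alternative, and it is in fact stated in \cite{PanPreYak06} for arbitrary finite-dimensional Lie algebras, so the concern you raise about the non-reductive $\mf{q}_m$ is not a real obstacle. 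The paper closes differently, via \cite[Theorem~1.1]{PanPreYak06}: the codimension-$\geq 2$ Jacobian locus implies, by Skryabin's algebraic-closedness theorem, that the subalgebra $R$ generated by the $({}^{e\!}P_i)_j$ is algebraically closed in $S(\mf{q}_m)$, and a transcendence-degree comparison using $\on{ind}\mf{q}_m=\operatorname{tr.deg}\,\C(\mf{q}_m^*)^{\mf{q}_m}$ then gives $R=S(\mf{q}_m)^{\mf{q}_m}$. That route avoids the degree count entirely.

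One remark on your parenthetical that your test point is ``not regular since $\psi_{m-1}=0$'': this is a symptom of a convention slip in the paper's setup rather than a genuine feature. Under the literal identification $\psi(xT^i)=\psi_i(x)$, the coefficient $P_0(\psi)=P(\psi_0)$ is \emph{not} a $\mf{q}_m$-invariant (for $\mf{q}=\mf{sl}_2$ and $m=2$ one checks directly that $(\ad^*(eT))P_0\neq 0$). The identification under which the $P_j$ are genuine Takiff invariants --- the one underlying \cite{RaTau92} --- pairs $T^i$ with $T^{m-1-i}$ via the Frobenius form on $\C[t]/(t^m)$; in those coordinates $\psi_0$ is the component at $T^{m-1}$, your test point has regular ``top'' component $\chi_0$, and is therefore a \emph{regular} point of $\mf{q}_m^*$, in agreement with \cite[Lemma~3.3(i)]{RaTau92}. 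Since your Jacobian computation manipulates the $P_j$ formally as functions of the arguments $\psi_0,\ldots,\psi_{m-1}$, it is unaffected by which pairing one uses, and the argument stands.
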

\begin{proof}
Since $^e\!P_i\in S(\mf{q})^{\mf{q}}$ the preceding remark shows that $(^e\!P_i)_j\in S(\mf{q}_m)^{\mf{q}_m}$ for all admissible $i$ and $j$. 
By \cite[Lemma~3.3(i)]{RaTau92}, the differentials
${\rm d}\,(^e\!P_i)_j$ with $1\le i\le \ell$ 
and $0\le j\le m-1$ are linearly independent at 
$\psi=(\psi_0,\ldots,\psi_{m-1})\in\mf{q}_m^*$ if and only if
the differentials ${\rm d}\,^e\!P_i$ are linearly independent at $\psi_{m-1}\in\mf{q}^*$. 
 Since 
$(\mf{q}^*)_{\rm sing}$ coincides with 
the Jacobian locus $\mathcal{J}(^e\!P_1,\ldots,^e\!P_\ell)$ 
by \cite[Theorem~2.1]{PanPreYak06}, the latter has codimension $\ge 2$ in $\mf{q}^*$ (here we use our assumption on $e$). The above discussion then yields that the same holds for the Jacobian 
locus of the $(^e\!P_i)_j$'s in $\mf{q}_m^*$. As a consequence, these elements are algebraically 
independent in $S(\mf{q}_m)$ and satisfy the conditions 
of \cite[Theorem~1.1]{PanPreYak06} which is an extended characteristic zero version of \cite[Theorem~5.4]{Sk02}. Applying this result shows that the $\C$-subalgebra, $R$, of $S(\mf{q}_m)^{\mf{q}_m}$ generated by the $m\ell={\rm ind}\,\mf{q}_m$ elements $(^e\!P_i)_j$ with $1\le i\le \ell$ and $0\le j\le m-1$
is algebraically closed in $S(\mf{q}_m)$. On the other hand, it is well known that
${\rm ind}\,\mf{q}_m$ coincides with the transcendence degree of the field of invariants $\C(\mf{q}_m^*)^{\mf{q}_m}$.
It follows that the field of fractions of $S(\mf{q}_m)^{\mf{q}_m}$ is algebraic over the subfield  generated by $R$. As $R$ is algebraically closed in $S(\mf{q}_m)^{\mf{q}_m}$ we now deduce that $R=S(\mf{q}_m)^{\mf{q}_m}$ and the proposition follows.
\end{proof}
As an immediate consequence we obtain the following:
\begin{Th}\label{Th:associated-graded}
Suppose all assumptions of Theorem \ref{Th:PPY}
hold and
let $\{P_1,\dots, P_\ell\}\subset S(\fing)^{\fing} $  be 
 a good generating system for $e$.
 Then 
$S(\affg^e_-)^{\affg^{e}_+}$ is a polynomial algebra
in variables 
 $T^j ({}^{e\!}P_i)$ where
$i=1,\dots, \ell$ and
$j\in\Z_{\ge 0}$.
\end{Th}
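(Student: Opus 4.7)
The plan is to pass from Proposition~\ref{Pro:inv} to the infinite-jet setting via a direct limit, and then to relabel the generators.

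First I would invoke the geometric identification $S(\affg^e_-)=\C[\mf{q}^*_\infty]$ recalled in Section~\ref{sec1} and realize the infinite jet scheme as the inverse limit $\mf{q}^*_\infty=\varprojlim_m J_{m-1}\mf{q}^*$ of truncated jet schemes $J_{m-1}\mf{q}^*=\Spec S(\mf{q}_m)$. This exhibits $S(\affg^e_-)$ as the union of its polynomial subalgebras $S(\mf{q}_m)$, where under the natural duality the generator $xT^i\in\mf{q}_m$ with $0\le i\le m-1$ corresponds to $x_{(-i-1)}\in\affg^e_-$. A direct commutator computation on these generators shows that the $\affg^e_+$-action on $S(\affg^e_-)$ preserves each $S(\mf{q}_m)$ and factors on it through the surjection $\mf{q}[[t]]\twoheadrightarrow \mf{q}_m$, since $\mf{q}[[t]]\,t^m$ annihilates $S(\mf{q}_m)$ and $\mathbf{1}$ acts trivially; consequently
\[
S(\affg^e_-)^{\affg^e_+}\,=\,\bigcup_{m\ge 1}S(\mf{q}_m)^{\mf{q}_m}.
\]

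Next I would apply Proposition~\ref{Pro:inv} uniformly in $m$: each $S(\mf{q}_m)^{\mf{q}_m}$ is the polynomial algebra on $({}^{e\!}P_i)_j$ with $1\le i\le\ell$ and $0\le j\le m-1$, and these generating sets nest compatibly because the Taylor coefficient $P_j$ depends only on jets of order $\le j$. Passing to the union identifies $S(\affg^e_-)^{\affg^e_+}$ as the polynomial algebra on the countably many $({}^{e\!}P_i)_j$ with $1\le i\le\ell$ and $j\ge 0$. To match these with the $T^j({}^{e\!}P_i)$ demanded by the statement I would verify $T^j(P)=j!\,P_j$ for $P\in S(\mf{q})$ embedded in $S(\affg^e_-)$ via $x\mapsto x_{(-1)}$; this follows from $T x_{(-1)}=x_{(-2)}$ and the Leibniz rule by a short Taylor-expansion computation against $\psi=\sum_k\psi_k t^k$, and the rescaling by $j!$ does not disturb the polynomial generation.

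The main obstacle I expect is the bookkeeping in the first step: making sure that the filtration by the $S(\mf{q}_m)$ is $\affg^e_+$-stable, that taking $\affg^e_+$-invariants commutes with the direct limit, and that no new invariants appear in the infinite-jet limit beyond those detected at each finite truncation. Once this direct-limit identification is secured, the rest of the argument reduces to applying Proposition~\ref{Pro:inv} at each level and renormalizing by factorials.
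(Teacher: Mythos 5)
Your argument is correct and follows the paper's own proof: both realize $S(\affg^e_-)^{\affg^e_+}$ as the directed union $\bigcup_m S(\mf{q}_m)^{\mf{q}_m}$, invoke Proposition~\ref{Pro:inv} at each finite truncation, note that the generating sets are nested, and identify the Taylor coefficient $({}^{e\!}P_i)_j$ with $T^j({}^{e\!}P_i)$ up to the harmless factor $j!$. The only cosmetic difference is in how invariance at the infinite level is certified: the paper observes that the $T^j({}^{e\!}P_i)$ are invariant under the pro-group $G^e(\C[t])$, while you use the equivalent and more direct fact that $t^m\mf{q}[[t]]$ annihilates $S(\mf{q}_m)$, so that the $\affg^e_+$-action on each truncated subalgebra factors through $\mf{q}_m$.
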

\begin{proof}
Let $\finq=\fing^e$  and write $\mf{q}_\infty$ for the Lie algebra $\mf{q}\otimes \C[t]=\finq[[t]]$. Since we identify
$\affq_-$ with $\affq/\affq_+$ the invariant ring
$S(\affq_-)^{\affq_+}$ gets identified with $S(\mf{q}_\infty)^{\mf{q}_\infty}$.
Since the conditions of Proposition~\ref{Pro:inv}
are satisfied for all Lie algebras $\finq_{(m)}=\finq[[t]]/t^m\finq[[t]]$ with $m\ge 0$
it is straightforward to see that
each invariant algebra
$\C[(\fing^e)_{(m)})^*]^{\fing^e_{(m)}}$ is freely generated by the elements $T^j(^{e\!}P_i)$ with
$1\le i\le \ell$ and $0\le j\le m-1$.
Note that $(\mf{q}_{(m)})^*$ is the $m$-th jet scheme of the affine space $\mf{q}^*$ and {\it all} polynomial functions
$T^j(^{e\!}P_i)\in\C[(\finq)^*_{\infty}]$ are invariant under the natural action of the group $G^e\big(\C[t]\big)$, where $G^e=Z_G(e)$. It follows that $T^j(^{e\!}P_i)\in\C[(\finq)^*_{\infty}]^{\finq_\infty}$ for all $i\le \ell$ and $j\ge 0$. In conjunction with the above this shows that
$$ S(\affq_-)^{\affq_+}=\,\C[(\finq)^*_{\infty}]^{\finq_{\infty}}=\,\varinjlim_m\,
\C[(\finq)^*_{(m)}]^{\finq_{(m)}}$$
is a polynomial algebra
in $T^j ({}^{e\!}P_i)$ where
$i=1,\dots, \ell$ and
$j\in\Z_{\ge 0}$.
\end{proof}

We now fix a good grading \cite{KacRoaWak03,ElaKac05}
\begin{align*}
 \fing=\bigoplus_{j\in \Z}\fing(i)
\end{align*}
for $e$.
Then $\fing^e=\bigoplus_{j\geq 0}\fing^e(j)$,
where $\fing^e(i)=\fing^e\cap \fing(i)$.
Let $\kappa_{e,c}$ be the invariant bilinear form
of $\fing^e$ defined by
\begin{align*}
 \kappa_{e,c}(x,y)=\begin{cases}
		    -\frac{1}{2}\on{tr}_{\fing_0}(\ad x\ad
		    y)-\frac{1}{2}\on{tr}_{\fing_1}(\ad x \ad y)&\text{for
		    }
x,y\in \fing^e(0),\\
0&\text{else},
		   \end{cases}
\end{align*}
for all homogeneous
$x,y\in \fing^e$.

Let 
$\W^{\kappa_c}(\fing,f)$ be the affine
$W$-algebra \cite{KacRoaWak03}
associated with
$(\fing,e)$ at the
{\em critical level} $\on{\kappa_{c}}$.
Here
\begin{align*}
\kappa_{c}(x,y)=-\frac{1}{2}\on{tr}_{\fing}(\ad x \ad
 y)\quad\text{for }x,y\in \fing.
\end{align*}
The following assertion was proved in \cite{KacWak04}
(see also \cite[Theorem 5.5.1]{Ara05}).
\begin{Th}[{\cite{KacWak04}}]\label{Th:loop-filtration}
 There exists a filtration
$F_{\bullet}\W^{\kappa_c}(\fing,e)$
of the vertex algebra  $\W^{\kappa_c}(\fing,e)$ such that
\begin{align*}
 \gr_F \W^{\kappa_c}(\fing,e)\,\cong\, 
V^{\kappa_{e,c}}(\mf{g}^e)
\end{align*}
as vertex algebras.
\end{Th}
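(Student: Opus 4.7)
The plan is to derive the filtration from the BRST / quantum Drinfeld--Sokolov realization of $\W^{\kappa_c}(\fing,e)$. Recall that this vertex algebra is defined as the degree-zero cohomology of a BRST complex
$$C(\fing,e)\,=\,V^{\kappa_c}(\fing)\otimes F^{\rm ch}(\fing_{>0})\otimes F^{\rm ne}(\fing_{1/2}),$$
where $F^{\rm ch}(\fing_{>0})$ is the charged ghost vertex superalgebra attached to $\fing_{>0}\oplus \fing_{>0}^*$ and $F^{\rm ne}(\fing_{1/2})$ is the neutral free fermion vertex algebra attached to the symplectic pairing on $\fing_{1/2}$ induced by $\langle f,[-,-]\rangle$; the differential $d$ is built from $f$ and the good grading in the usual way.

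First, I would invoke the structure theorem of Kac--Roan--Wakimoto: for any $\ad h$-homogeneous basis $\{x_\alpha\}$ of $\fing^e$ with $x_\alpha\in\fing^e(2 j_\alpha)$, there exist cohomology classes $J^{\{x_\alpha\}}\in \W^{\kappa_c}(\fing,e)$, of conformal weight $1+j_\alpha$, that strongly and freely generate $\W^{\kappa_c}(\fing,e)$ as a differential vertex algebra and take the form
$$J^{\{x_\alpha\}}\,=\,x_\alpha+(\text{corrections of strictly higher $\ad h$-weight}).$$
One then defines $F_p\W^{\kappa_c}(\fing,e)$ as the $\C$-span of normally ordered monomials in the $J^{\{x_\alpha\}}$ and their $T$-derivatives of total $j_\alpha$-weight at most $p$. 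Equivalently, this filtration is inherited from a Kazhdan-type filtration on $C(\fing,e)$ that combines conformal weight with the $\ad h$-grading, so that $d$ is strictly filtration-decreasing and the leading symbols of cocycles are well-defined.

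Finally, one identifies $\gr_F\W^{\kappa_c}(\fing,e)$ with $V^{\kappa_{e,c}}(\fing^e)$. The symbols $\overline{J^{\{x_\alpha\}}}$ span a copy of $\fing^e$ in conformal weight one, and one computes their leading-order $\lambda$-bracket by splitting it into three contributions coming from the three tensor factors of $C(\fing,e)$: the original $\kappa_c$-pairing on $\fing$, a ghost correction expressible as a trace of $\ad x\,\ad y$ on a piece of $\fing_{>0}$, and a neutral fermion correction of similar shape on $\fing_{1/2}$. The sum of these contributions is exactly the bilinear form $\kappa_{e,c}$ displayed just above the theorem, which yields a surjective vertex algebra homomorphism $V^{\kappa_{e,c}}(\fing^e)\to\gr_F\W^{\kappa_c}(\fing,e)$; freeness of the strong generators on both sides then promotes it to an isomorphism. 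The main obstacle is the explicit evaluation of the trace corrections---keeping careful track of which ghost and fermion pieces survive in the associated graded---together with the verification that $d$ respects the Kazhdan filtration for a suitable choice of representative cocycles $J^{\{x_\alpha\}}$.
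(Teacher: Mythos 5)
The paper does not prove Theorem~\ref{Th:loop-filtration}: it is imported as a citation to Kac--Wakimoto \cite{KacWak04} and to \cite[Theorem~5.5.1]{Ara05}, so there is no in-paper argument to compare against. Your sketch does, however, capture the strategy used in those references: pass to the BRST complex for quantum Drinfeld--Sokolov reduction, invoke the Kac--Roan--Wakimoto structure theorem giving strong generators $J^{\{x_\alpha\}}$ indexed by an $\ad h$-homogeneous basis of $\fing^e$, define a filtration by the $\ad h$-weight (equivalently, a Kazhdan-type filtration on $C(\fing,e)$ combining conformal weight with the good grading), and compute the leading $\lambda$-brackets to recognize the associated graded as $V^{\kappa_{e,c}}(\fing^e)$. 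Your identification of the three contributions to the level --- the original $\kappa_c$, the charged ghost trace over $\fing_{>0}$, and the neutral fermion trace over the odd piece of the good grading (the paper's $\fing_1 = \fing(1)$) --- is exactly how the shifted form $\kappa_{e,c}$ displayed before the theorem arises. The one point you flag as an obstacle, showing that the differential $d$ strictly decreases the Kazhdan filtration so that the filtration descends to cohomology with the expected associated graded, is indeed the technical crux; in the actual proofs it is handled by a spectral sequence collapsing at the first page, and it is worth being explicit that this collapse, rather than the mere existence of nice generators, is what guarantees $\gr_F\W^{\kappa_c}(\fing,e)$ has no extra pieces. With that caveat made explicit, your proposal is a faithful outline of the cited proof.
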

This result enables us to attach to every element $a\in \W^{\kappa_c}(\fing,e)$ its $F$-{\it symbol} 
$\sigma_F(a)\in V^{\kappa_{e,c}}(\mf{g}^e)$. 
Let
$\mc{S}_e:=e+\mf{g}^f$, the Slodowy slice to the adjoint $G$-orbit of $e$.
\begin{Th}[{\cite[Theorem 4.4.7]{Ara09b}}]\label{Th:grW}
 There is a natural isomorphism
\begin{align*}
 \gr \W^{\kappa_c}
(\fing,f)\,\cong\, \C[(\mathcal{S}_e)_{\infty}],
\end{align*}
where
$ \gr \W^{\kappa_c}(\fing,f)$ denotes the graded Poisson vertex algebra
associated with the Li filtration of 
$\W^{\kappa_c}(\fing,f)$.
\end{Th}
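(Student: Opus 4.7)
The plan is to realize $\W^{\kappa_c}(\fing,f)$ as the zeroth cohomology of a BRST complex implementing quantum Drinfeld--Sokolov reduction, transfer the Li filtration to that complex, and identify the associated graded complex with the classical Poisson-vertex Koszul complex computing the jet scheme of the Hamiltonian reduction.

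Concretely, using the good grading $\fing=\bigoplus_{j\in\Z}\fing(j)$, set $\mf{m}:=\bigoplus_{j\le -1}\fing(j)$ and define $\chi\in\mf{m}^*$ by $\chi(x)=\kappa_c(f,x)$. Form the BRST complex $C^\bullet:=V^{\kappa_c}(\fing)\otimes F^{ch}(\mf{m})$, where $F^{ch}(\mf{m})$ is the charged fermion vertex algebra associated to $\mf{m}\oplus\mf{m}^*$, and equip it with the standard differential $d_{st}$ twisted by $\chi$. By the Kac--Roan--Wakimoto/Feigin--Frenkel construction, $H^i(C^\bullet,d_{st})=0$ for $i\neq 0$ and $H^0(C^\bullet,d_{st})\cong \W^{\kappa_c}(\fing,f)$ as vertex algebras. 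I would then extend the Li filtration from $V^{\kappa_c}(\fing)$ to $F^{ch}(\mf{m})$ (declaring ghosts to have positive filtration degree matching their conformal weight) and to the tensor product $C^\bullet$, and verify that $d_{st}$ preserves this filtration by tracking the conformal weight and ghost degree of each summand.

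Passing to associated graded, the super Poisson vertex algebra $\gr C^\bullet$ is isomorphic to $\C[\fing^*_\infty]\otimes \Lambda(\mf{m}^*_\infty\oplus\mf{m}_\infty[-1])$, and $\gr d_{st}$ becomes the classical Koszul differential for the shifted moment map $\mu-\chi:\fing^*_\infty\to\mf{m}^*_\infty$. By the Gan--Ginzburg/Premet refinement of Kostant's theorem, the action map furnishes an $M$-equivariant isomorphism of affine varieties $M\times \mathcal{S}_e\isomap(\mu-\chi)^{-1}(0)$; applying the jet-scheme functor, which preserves products and isomorphisms, yields $M_\infty\times (\mathcal{S}_e)_\infty\isomap((\mu-\chi)^{-1}(0))_\infty$. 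Combined with flatness of $\mu-\chi$ and smoothness of $M_\infty$-orbits, this gives a classical jet-scheme Kostant theorem showing that the cohomology of $(\gr C^\bullet,\gr d_{st})$ is concentrated in degree $0$ and equals $\C[(\mathcal{S}_e)_\infty]$, with its natural Poisson vertex algebra structure inherited from the reduction.

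Finally, the spectral sequence associated to the Li filtration of $(C^\bullet,d_{st})$ has $E_1$-page $H^\bullet(\gr C^\bullet,\gr d_{st})$ and abuts to $H^\bullet(C^\bullet,d_{st})$. Since the $E_1$-page is concentrated in cohomological degree $0$, the spectral sequence degenerates at $E_1$, and the natural isomorphism $\gr \W^{\kappa_c}(\fing,f)\cong \C[(\mathcal{S}_e)_\infty]$ of Poisson vertex algebras follows. The main obstacle I expect is the jet-scheme Kostant theorem: while the finite-dimensional statement is classical, one must verify that $\mu-\chi$ is sufficiently well-behaved (flatness, and that the slice stays transversal) so that passage to jet schemes preserves the isomorphism and the cohomology computation; checking compatibility of $d_{st}$ with the Li filtration is largely bookkeeping by comparison.
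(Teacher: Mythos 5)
The paper gives no proof of this statement---it is quoted verbatim from \cite[Theorem~4.4.7]{Ara09b}. Your proposal is essentially a reconstruction of the argument actually given in that reference: realize $\W^{\kappa_c}(\fing,f)$ as the cohomology of a BRST (Drinfeld--Sokolov) complex, transport the Li filtration to the complex, identify the associated graded complex with the classical Koszul complex over the arc space of $\fing^*$, obtain the vanishing of higher cohomology from an arc-space version of Kostant's slice theorem (using the Gan--Ginzburg isomorphism $M\times\mathcal{S}_e\isomap\mu^{-1}(\chi)$, flatness of the moment map, and the fact that jet functors preserve isomorphisms and fibre products), and finally let the resulting spectral sequence degenerate. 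You also correctly identify the nontrivial step, namely the arc-space Kostant theorem. So the approach matches.

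Two repairs are needed before the argument is complete. (i) When the good grading is not even, i.e.\ $\fing(-1)\ne 0$ (a case that does occur and that the paper explicitly allows, cf.\ the definition of $\kappa_{e,c}$), the linear form $\chi$ is not a Lie algebra character of $\mf{m}=\bigoplus_{j\le -1}\fing(j)$, since $[\fing(-1),\fing(-1)]\subseteq\fing(-2)$ pairs nontrivially with it. Your BRST complex is therefore only well-defined in the even case as written: one must either replace $\mf{m}$ by $\mf{l}\oplus\bigoplus_{j\le -2}\fing(j)$ for a Lagrangian $\mf{l}\subset\fing(-1)$, or, as in Kac--Roan--Wakimoto, adjoin a sector of neutral free fermions attached to $\fing(-1)$; the Gan--Ginzburg slice isomorphism must be adapted accordingly. (Also check your sign conventions: with $f\in\fing(-2)$ the pairing $\kappa_c(f,\cdot)$ vanishes on the negatively graded part by invariance, so the relevant character pairs the opposite nilradical.) (ii) The Li filtration is an infinite decreasing filtration, so convergence of the spectral sequence is not automatic; one must restrict to homogeneous components of fixed conformal weight, where the filtration becomes finite. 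Both points are handled in \cite{Ara09b}; modulo these repairs your outline captures the standard proof.
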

Let $\finq=\fing^e$. The $F$-filtration also
induces a filtration of
the associated graded
vertex Poisson algebra
$\gr \W^{\kappa_c}(\fing,e)$,
which we identify with $ \C[(\mathcal{S}_e)_{\infty}]$ by Theorem \ref{Th:grW}.
Using the BRST realization of $ \C[(\mathcal{S}_e)_{\infty}]$ (\cite{Ara09b}),
we  find in the same  way as Theorem \ref{Th:loop-filtration}
that \begin{align}
\gr_F (\C[(\mathcal{S}_e)_{\infty}])\cong \C[(\mf{q}^*)_{\infty}],
\label{eq:gr-f-for-Poisson},
\end{align}
which restricts to an isomorphism
$
\gr_F \C[\mc{S}_e]\cong  \C[\mf{q}^*]
$.
For $a\in \C[(\mathcal{S}_e)_{\infty}]$, let $\sigma_F(a)\in \C[(\mf{q}^*)_{\infty}]$  denote its $F$-symbol.
It will be crucial in what follows that 
for $a\in \C[\mathcal{S}_e]\subset \C[(\mathcal{S}_e)_{\infty}]$ the $F$-symbol
$\sigma_F(a)\in \C[\mf{q}^*]$ coincides with the initial term of $a $ in the sense of \cite{PanPreYak06}.

Theorem \ref{Th:loop-filtration} in conjunction with
\eqref{eq:case-of-affine}
and
\eqref{eq:gr-f-for-Poisson}
implies that 
 \begin{align}
\sigma_F(\sigma(a))=\sigma(\sigma_F(a))\ \mbox{ for all }\,\, a\in  \W^{\kappa_c}(\fing,e).
\label{eq:two-filtration-commutes}
\end{align}

\begin{Th}[\cite{A11}]\label{Th:center-of-W}
There is a natural  isomorphism
of vertex algebras 
\begin{align*}
Z(V^{\kappa_c}(\fing))\,\stackrel{\sim}{\longrightarrow}\, Z(\W^{\kappa_c}(\fing,e)),
\end{align*}
which induces an
embedding
\begin{align*}\,
 S(\affg_-)^{\affg_+} \hookrightarrow  \C[(\mc{S}_e)_{\infty}],
\quad P\mapsto P|_{(\mc{S}_e)_{\infty}},
\end{align*}
of associated graded vertex Poisson algebras.
\end{Th}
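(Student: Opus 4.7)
The plan is to construct the desired map via the quantum Drinfeld--Sokolov realization of $\W^{\kappa_c}(\fing,e)$ as the $0$-th cohomology of a BRST complex $\bigl(V^{\kappa_c}(\fing)\otimes F,\, d\bigr)$, where $F$ is the ghost vertex algebra attached to the chosen good grading and $d$ is the Drinfeld--Sokolov differential built from currents of $V^{\kappa_c}(\fing)$ twisted by the character of $e$. Any element $z\in Z(V^{\kappa_c}(\fing))$ commutes with every current of $V^{\kappa_c}(\fing)$, hence $dz=0$, so $z$ descends to a cohomology class $\phi(z)\in \W^{\kappa_c}(\fing,e)$. Since $z$ commutes with any representative of every cohomology class, $\phi(z)$ lies in $Z(\W^{\kappa_c}(\fing,e))$, and this defines a vertex algebra homomorphism $\phi: Z(V^{\kappa_c}(\fing))\to Z(\W^{\kappa_c}(\fing,e))$ which is manifestly compatible with the Li filtrations on both sides.

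To analyze $\phi$ I would pass to associated graded. By Feigin--Frenkel one has $\gr Z(V^{\kappa_c}(\fing))=S(\affg_-)^{\affg_+}$, while Theorem \ref{Th:grW} identifies $\gr \W^{\kappa_c}(\fing,e)$ with $\C[(\mc{S}_e)_\infty]$. The classical (Poisson) limit of the BRST differential $d$ is the Koszul complex computing the Hamiltonian reduction of $\C[\fing^*_\infty]$ by $\fing[[t]]$, whose zeroth cohomology is $\C[(\mc{S}_e)_\infty]$; this is essentially Theorem \ref{Th:grW} in its Poisson incarnation. Under these identifications the associated graded of $\phi$ is precisely the restriction map $P\mapsto P|_{(\mc{S}_e)_\infty}$ induced by the closed embedding of jet schemes $(\mc{S}_e)_\infty\hookrightarrow \fing^*_\infty$, which yields the asserted shape of the associated graded map.

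Injectivity of this restriction is a jet-scheme version of Kostant's transversality: the adjoint $G$-saturation of $\mc{S}_e$ is dense in $\fing^*$, so a $G$-invariant polynomial on $\fing^*$ is determined by its values on $\mc{S}_e$, and a standard jet-scheme adaptation upgrades this to the statement that $\affg_+$-invariants on $\fing^*_\infty$ inject into $\C[(\mc{S}_e)_\infty]$. This gives injectivity of the associated graded map and hence of $\phi$ itself. For full surjectivity of $\phi$ one would have to compare conformal (Poincar\'e) characters: the domain is a polynomial algebra with explicit character by Feigin--Frenkel, and one must match this against the character of $Z(\W^{\kappa_c}(\fing,e))$. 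I expect this last step to be the main obstacle, requiring either a vanishing statement for higher BRST cohomology applied to central inputs, or a spectral sequence argument that degenerates on the subspace of central classes.
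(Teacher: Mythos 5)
The paper does not reprove this statement; it cites it directly from \cite{A11}, so there is no internal argument to compare against. Your sketch nevertheless matches the structure of Arakawa's argument in that reference: construct the map via BRST reduction (central elements are automatically cocycles, since the $V^{\kappa_c}(\fing)$-part of the Drinfeld--Sokolov differential acts on $z\otimes 1$ through modes $x_{(n)}$, $n\geq 0$, which kill the center, and the image is central because it commutes with every class), identify the associated graded map with restriction to $(\mc{S}_e)_\infty$ via Theorem~\ref{Th:grW}, and obtain injectivity from a jet-scheme version of Kostant transversality.

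The surjectivity step is where the real work lies, and your vague appeal to ``character comparison'' can be sharpened into a sandwich argument, but the classical ingredient you need is strictly stronger than the one you invoke for injectivity. You need to know that the Poisson vertex center $Z\bigl(\C[(\mc{S}_e)_\infty]\bigr)$ is \emph{exactly} the image of $S(\affg_-)^{\affg_+}=\C[\fing^*_\infty]^{\fing[[t]]}$ under restriction, not merely that the restriction map is injective. Once that equality is in hand, one has
$\gr Z(\W^{\kappa_c}(\fing,e)) \subseteq Z\bigl(\gr \W^{\kappa_c}(\fing,e)\bigr) = Z\bigl(\C[(\mc{S}_e)_\infty]\bigr)$
(the first inclusion holds for any vertex algebra with its Li filtration), and the right-hand side coincides with $\gr$ of the image of $\phi$, which is contained in the left-hand side; this forces equality throughout and hence surjectivity of $\phi$. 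The spectral sequence or BRST-vanishing routes you suggest become unnecessary once you argue this way. So the concrete gap in your proposal is: you must supply the jet-scheme Kostant theorem in its \emph{equality} form at the Poisson vertex algebra level, rather than only its injectivity form.
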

The image of $\hat{P}_i\in Z(V^{\kappa_c}(\fing))$ 
in $Z(\W^{\kappa_c}(\fing,e))$ will also be denoted by 
$\hat{P}_i$. 
The following result is well-known.
\begin{Th}[Feigin and Frenkel \cite{FeiFre92,Fre05}]
Each set $\{P_1,\dots,P_{\ell}\}$ of homogeneous
generators of $S(\fing)^{\fing}$ admits a lift
$\{\hat P_1,\dots, \hat P_{\ell}\}\subset 
Z(V^{\kappa_c}(\fing))$ such that
the symbol of $\hat{P_i}$ equals to $P_i\in S(\fing)^{\fing}
\subset S(\affg_-)^{\affg_+}$ for each $i\le \ell$.
Furthermore,
$Z(V^{\kappa_c}(\fing))$ is a polynomial algebra
in variables  $T^i \hat{P_j}$ with $i\in\Z_{\ge 0}$ and $j=1,\dots,\ell$.
\end{Th}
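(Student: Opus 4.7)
The strategy is to reduce the statement to two sub-tasks: (i) construct, for each invariant $P_i \in S(\fing)^{\fing}$, a lift $\hat P_i \in Z(V^{\kappa_c}(\fing))$ whose symbol in $S(\affg_-)$ is $P_i$; and (ii) show that the subalgebra generated by the $T^j \hat P_i$ is all of $Z(V^{\kappa_c}(\fing))$. Once these are in place, the polynomial structure is automatic from the injectivity of the symbol map $\sigma : Z(V^{\kappa_c}(\fing)) \to S(\affg_-)$ together with the fact that the $T^j P_i$ freely generate $S(\affg_-)^{\affg_+}$ as a polynomial algebra --- which is precisely the $e=0$ specialization of Theorem~\ref{Th:associated-graded} (the hypotheses being Kostant's classical statements for $\fing$ itself).

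For the reduction itself, one first verifies the general inclusion $\sigma(Z(V^{\kappa_c}(\fing))) \subseteq S(\affg_-)^{\affg_+}$: if $z$ is central then $x_{(n)} z = 0$ for every $x \in \fing$ and $n \geq 0$, and passing to associated gradeds (using that the action of $x_{(n)}$ is compatible with the Li filtration in the standard way) yields that $\sigma(z)$ is killed by the Poisson vertex action of $\affg_+$, hence lies in $S(\affg_-)^{\affg_+}$. Granted the lifts $\hat P_i$, the translation operator $T$ preserves $Z(V^{\kappa_c}(\fing))$, so every $T^j \hat P_i$ is again central with symbol $T^j P_i$; the subalgebra $R \subseteq Z(V^{\kappa_c}(\fing))$ they generate therefore satisfies $\sigma(R) = S(\affg_-)^{\affg_+}$. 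A standard filtered-lifting induction --- any $z \in Z(V^{\kappa_c}(\fing))$ has $\sigma(z) \in \sigma(R)$ and so coincides with an element of $R$ modulo a central element of strictly smaller Li degree --- then forces $R = Z(V^{\kappa_c}(\fing))$, and the freeness of the $T^j \hat P_i$ in $R$ is inherited from that of their symbols via injectivity of $\sigma$.

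The substantive content of the theorem is therefore the construction of the lifts $\hat P_i$, and this is the main obstacle. At a generic level $\kappa$ the center $Z(V^\kappa(\fing))$ reduces to the scalars, so the lifting problem has genuine obstructions and one really needs the critical-level miracle. The classical route is the Wakimoto free-field realization: one embeds $V^{\kappa_c}(\fing)$ into a Heisenberg-type vertex algebra and identifies $Z(V^{\kappa_c}(\fing))$ with the joint kernel of $\ell$ screening operators $S_1, \dots, S_\ell$. At $\kappa = \kappa_c$ these screenings degenerate in the precise way that matches them to the screenings defining the classical $\W$-algebra attached to the Langlands-dual Lie algebra, producing the Feigin--Frenkel isomorphism between $Z(V^{\kappa_c}(\fing))$ and a jet version of that classical $\W$-algebra and, with it, explicit lifts $\hat P_i$. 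A more homological alternative would be to deform $\kappa$ and recover the lifts as residues of central vectors that blow up as $\kappa \to \kappa_c$; either approach requires genuine free-field input, and that is the heart of the theorem.
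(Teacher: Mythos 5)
The paper does not give a proof of this theorem: it is stated as a known result and attributed to Feigin--Frenkel \cite{FeiFre92} and Frenkel \cite{Fre05}, so there is no internal argument to compare your proposal against. With that caveat, your sketch is a faithful high-level outline of the standard proof. The preliminary reduction is sound: the observation that the Li symbol map $\sigma$ sends $Z(V^{\kappa_c}(\fing))$ into $S(\affg_-)^{\affg_+}$ follows exactly as you say from compatibility of the $x_{(n)}$-action with the Li filtration; the filtered-lifting induction (performed within each graded piece $V^{\kappa_c}(\fing)[d]$, where the Li filtration is finite) does force $R = Z(V^{\kappa_c}(\fing))$ once one knows $\sigma(R) = S(\affg_-)^{\affg_+}$; and the identification of $S(\affg_-)^{\affg_+}$ as the free differential polynomial algebra on the $P_i$ is indeed the $e=0$ instance of Theorem~\ref{Th:associated-graded} with Kostant's theorems supplying the hypotheses. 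One small imprecision: $\sigma$ is not injective as a map out of $V^{\kappa_c}(\fing)$, so ``injectivity of $\sigma$'' is not the right phrase; the correct statement is that algebraic independence of the symbols $T^j P_i$ in the integral domain $\gr V^{\kappa_c}(\fing) \cong S(\affg_-)$ forces algebraic independence of the $T^j\hat P_i$ themselves, which is what you mean. You correctly identify the construction of the lifts $\hat P_i$ --- equivalently, the surjectivity of $\sigma|_{Z}$ onto $S(\affg_-)^{\affg_+}$ --- as the genuine content, and you correctly attribute it to the Wakimoto free-field realization together with the screening-operator description of the center and its degeneration at the critical level. You do not carry out that construction, so your proposal is a proof modulo the central Feigin--Frenkel input, not a self-contained argument; but since the paper itself treats this as a black box, deferring to \cite{FeiFre92,Fre05} at precisely that point is the right call.
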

We may assume that 
each $\hat P_i$ is homogeneous with respect to  the grading 
\eqref{eq:grading}.

\begin{proof}[Proof of Theorem \ref{Th:FF-theorem-for-centralizer}]
If $a\in Z(\W^{\kappa_c}(\fing,e))$ then $\sigma_F(a)\in Z(V^{\kappa_{e,c}}(\fing^e))$.
We set
$${}^{e\!}\widehat{P_i}:=\sigma_F(\widehat{P_i})\in Z(V^{\kappa_{e,c}}(\fing^e)).$$
Then, by \eqref{eq:two-filtration-commutes},
\begin{align*}
\sigma({}^{e\!}\widehat{P_i})=\sigma(\sigma_F(\widehat{P}_i))=\sigma_F(\sigma(\widehat{P}_i))
=\sigma_F(P_i)={}^e P_i
\end{align*}
for each $i=1,\dots,\ell$.
Thus,
${}^{e\!}\widehat{P_i}\in  Z(V^{\kappa_{e,c}}(\fing^e))$ is a lift of ${}^e P_i\in S(\affg^e_-)^{\affg^e_+}$.
We have
$\sigma(T^j({}^{e\!}\widehat{P_i}))=T^j({}^e P_i)
$ for all $j\geq 0$.
The above-mentioned identification $\gr V^{\kappa_{e,c}}(\fing^e)=
S(\affg^e_-)$ 
then induces
an embedding
$$\gr Z(V^{\kappa_{e,c}}(\fing^e))
=\gr (V^{\kappa_{e,c}}(\fing^e)^{\affg_+})\hookrightarrow 
S(\affg^e_-)^{\affg^e_+}.$$
 In view of Theorems~\ref{Th:associated-graded} the assertion follows.
\end{proof}
\begin{proof}[Proof of Theorem \ref{Th:Main}]
Since $\Phi_\chi$ is an algebra homomorphism, setting in Lemma~\ref{Lem:FFTL} $U=Z(V^{\kappa_{e,c}}(\fing^e))$ 
and applying Theorem \ref{Th:FF-theorem-for-centralizer}
one observes that the subalgebra 
 $\mc{A}_{e,\chi}$ of $U(\fing^e)$ generated by 
all $\Phi_{\chi,n}(^e\!\widehat{P_i})
$ with 
$1\le i\le \ell$ and
$0\le n\le \deg ^e\!P_i-1$ is commutative. Since $\fing^e$ satisfies the conditions of Theorem~\ref{Th:PPY} and $\chi$ is regular in $(\fing^e)^*$, it follows from \cite[Theorem~3.2(i)]{PanYak08} that
the $\chi$-shifts $D_\chi^j(^e\!P_i)$ with $1\le i\le \ell$ and $0\le j\le \deg ^e\!P_i-1$ are algebraically independent in $S(\fing^e)$ and hence nonzero. 
Applying Lemma~\ref{Lem:symbol} 
now yields that the symbol of every 
$\Phi_{\chi,n}({^e}\!\widehat{P_i})$ in $S(\fing^e)$ equals
$\bar \Phi_{\chi,n}(^e\!P_i)$.  
In view of \eqref{TT-shift}, \eqref{T-shift} and \eqref{eq:formula-for-bar-P} this means that the 
Poisson-commutative subalgebra $\gr \mc{A}_{e,\chi}$
contains $\bar{\mc{A}}_{e,\chi}$. 

Conversely, if $a\in \mc{A}_{e,\chi}$ then 
Lemma~\ref{Lem:symbol} combined with \cite[Theorem~3.2(i)]{PanYak08} yields that the symbol of $a$ in $S(\fing^e)$ lies in the subalgebra generated by $D_\chi^j(^e\!P_i)$;
see \cite[Theorem~3.14]{FeiFreTol10} for a similar argument. Hence $\gr \mc{A}_{e,\chi}=\bar{\mc{A}}_{e,\chi}$.
If $(\fing^e)^*_{\on{sing}}$ has codimension $\ge 3$ in $(\fing^e)^*$ then Theorem~\ref{Th:PY} says that $\bar{\mc{A}}_{e,\chi}$ is a maximal 
Poisson-commutative subalgebra of $S(\fing^e)$. This implies that $\gr \mc{A}_{e,\chi}$ is a maximal commutative subalgebra of $U(\fing^e)$, completing the proof.
\end{proof}
\begin{Rem}\label{RR}
We see that if $e$ satisfies the conditions of Theorem~\ref{Th:PPY} and $\chi$ is a regular linear function on $\fing^e$ then the commutative subalgebra $\mc{A}_{e,\chi}$ of $U(\fing^e)$ is a quantization of the Mishchenko--Fomenko subalgebra $\bar{\mc{A}}_{e,\chi}$. So Vinberg's problem has a positive solution in this case. However, when
$(\fing^e)^*_{\on{sing}}$ has codimension $2$ in $(\fing^e)^*$ the maximality of the commutative subalgebra $\mc{A}_{e,\chi}$ in $U(\fing^e)$ is not guaranteed. 
\end{Rem}
Remark~\ref{RR} brings to our attention the problem of classifying those nilpotent centralisers $\fing^e$ for which $(\fing^e)^*_{\on{sing}}$ has codimension $\ge 3$ in $(\fing^e)^*$. In the next section we show that this property holds in the case where $\ell\ge 2$ and $e$ lies in the minimal nonzero nilpotent orbit of $\fing$.
\section{The singular locus in the minimal nilpotent case}
\label{sec:codim} 
If $G$ is a group of type ${\rm A}$ then it follows from \cite[Theorem~5.4]{Yak09} that $(\mf{g}^e)^*_{\rm sing}$ has codimension $\ge 3$ in $(\mf{g}^e)^*$ for any non-regular nilpotent element $e\in\mf{g}$. On the other hand, the example in \cite[3.9]{PanPreYak06} shows that outside type ${\rm A}$ there exist nilpotent elements $e\in\mf{g}$
for which $(\mf{g}^e)^*_{\rm sing}$ has codimension $1$ in $(\mf{g}^e)^*$.
Let $\mathcal{O}_{\rm min}$ denote the minimal nonzero nilpotent 
$G$-orbit in $\mf{g}$ (it consists of all nonzero
elements $e\in\mf{g}$ such that $[e,[e,\mf{g}]]=\C e$).
In this section we are going to prove the following:
\begin{Th}\label{Th:codim3} If $\mf{g}$ is a finite dimensional simple Lie algebra of rank $>1$ over $\C$ and $e\in\mathcal{O}_{\rm min}$ then
$(\mf{g}^e)^*_{\rm sing}$ has codimension $\ge 3$ in $(\mf{g}^e)^*$.
\end{Th}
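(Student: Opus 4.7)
The plan is to exploit the explicit structure of $\mf{g}^e$ for $e\in\mathcal{O}_{\rm min}$ and to compute coadjoint stabilizers stratum by stratum. Fix an $\mf{sl}_2$-triple $\{e,f,h\}$ containing $e$; the associated short $\Z$-grading $\mf{g}=\bigoplus_{i=-2}^{2}\mf{g}(i)$ has $\mf{g}(\pm 2)=\C e,\,\C f$ one-dimensional, and
$$\mf{g}^e \,=\, \mf{g}^\natural \oplus \mf{g}(1) \oplus \C e,$$
where $\mf{g}^\natural:=\mf{g}^{e,f,h}$ is the reductive centralizer of the triple, $\C e$ is central in $\mf{g}^e$, the bracket $[v,w]=\omega(v,w)\,e$ on $\mf{g}(1)$ is determined by a non-degenerate $\mf{g}^\natural$-invariant symplectic form $\omega$, and $\mf{g}^\natural$ acts on $\mf{g}(1)$ by restriction of $\mathrm{ad}$.

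Parameterize $\chi\in(\mf{g}^e)^*$ as $(\mu,\nu,c)\in(\mf{g}^\natural)^*\oplus\mf{g}(1)^*\oplus\C$ with $c=\chi(e)$. A direct computation shows that $(a,v,te)\in(\mf{g}^e)^\chi$ iff $t$ is arbitrary and
\begin{align*}
\mu([a,a']) \,-\, \nu(a'\!\cdot\!v) &= 0\quad(\forall\,a'\in\mf{g}^\natural),\\
\nu(a\!\cdot\!v') \,+\, c\,\omega(v,v') &= 0\quad(\forall\,v'\in\mf{g}(1)).
\end{align*}
I would stratify $(\mf{g}^e)^*$ by $c$. On the open set $\{c\ne 0\}$, non-degeneracy of $c\omega$ determines $v$ uniquely as a linear function of $a$ from the second equation; substitution into the first yields $\mathrm{ad}^*(a)\,\widetilde\mu_\chi=0$, where $\widetilde\mu_\chi\in(\mf{g}^\natural)^*$ depends polynomially on $(\mu,\nu,c^{-1})$ and $\mu\mapsto\widetilde\mu_\chi$ is an affine bijection on $(\mf{g}^\natural)^*$ for each fixed $(\nu,c)$ with $c\ne 0$. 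Hence $(\mf{g}^e)^\chi\cong\C e\oplus(\mf{g}^\natural)^{\widetilde\mu_\chi}$, and $\chi\in(\mf{g}^e)^*_{\rm sing}$ iff $\widetilde\mu_\chi\in(\mf{g}^\natural)^*_{\rm sing}$. For every simple $\mf{g}$ of rank $\ge 2$ except type $\mathrm{A}_2$, the Lie algebra $\mf{g}^\natural$ is reductive with positive semisimple rank, so Kostant's theorem gives $\mathrm{codim}\big((\mf{g}^\natural)^*_{\rm sing},(\mf{g}^\natural)^*\big)\ge 3$; pulling back yields codimension $\ge 3$ for the singular locus inside $\{c\ne 0\}$. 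The case $\mf{g}=\mf{sl}_3$, in which $\mf{g}^\natural=\C$, is handled by the explicit computation of Section~\ref{sec:example}.

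The main obstacle is the closed stratum $\{c=0\}$, which already has codimension $1$ in $(\mf{g}^e)^*$: one must show its singular locus has codimension $\ge 2$ inside this hyperplane. When $c=0$ the coadjoint action factors through $\mf{g}^e/\C e=\mf{g}^\natural\ltimes\mf{g}(1)$ with $\mf{g}(1)$ abelian, and the second stabilizer condition reduces to $a\in(\mf{g}^\natural)^\nu$, where $(\mf{g}^\natural)^\nu$ is the $\mf{g}^\natural$-stabilizer of $\nu$ for the natural action on $\mf{g}(1)^*$. I would stratify further by $\nu$ and invoke Ra\"is's index formula for a semidirect product: on the Zariski-open stratum of $\mf{g}(1)^*$ where $\dim(\mf{g}^\natural)^\nu$ attains its minimum, a rank count shows $\chi$ is regular for generic $\mu$, so the relevant singular contribution comes from the jump locus of $\dim(\mf{g}^\natural)^\nu$ in $\mf{g}(1)^*$. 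Verifying that this jump locus has codimension $\ge 2$ reduces to a type-by-type inspection of the $\mf{g}^\natural$-module $\mf{g}(1)$, whose structure is known from the classification of minimal nilpotent orbits; this case-by-case verification is the laborious step, and is where I expect the main difficulty to sit.
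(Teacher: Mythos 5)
Your decomposition $\mf{g}^e=\mf{g}^\natural\oplus\mf{g}(1)\oplus\C e$ and the stabilizer equations are correct, and the analysis on the open stratum $\{c\ne 0\}$ is valid: eliminating $v$ via the symplectic form produces (up to a moment-map shift) the affine bijection $\mu\mapsto\widetilde\mu_\chi=\mu+c^{-1}\Phi(\nu^\sharp)$, and since ${\rm ind}\,\mf{g}^e=\ell=1+{\rm ind}\,\mf{g}^\natural$, the singular locus over $\{c\ne 0\}$ is the preimage of $(\mf{g}^\natural)^*_{\rm sing}$ and has codimension $\ge 3$ by Kostant (including the abelian $\mf{g}^\natural$ cases, where it is empty). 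This is in the same spirit as, and somewhat more explicit than, what the paper does for an irreducible component $X\not\subseteq\mathcal{H}$.

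The genuine gap is in the hyperplane $\{c=0\}=\mathcal{H}$, which is also where the paper spends almost all of its effort. Two problems. First, your open-stratum argument is insufficient: the statement that ``a rank count shows $\chi$ is regular for generic $\mu$'' over the open $G_0$-orbit in $\mf{g}(1)^*$ only shows that the singular locus has codimension $\ge 1$ in that stratum, hence codimension $\ge 2$ in $(\mf{g}^e)^*$, which falls one short of the theorem. One must actually show that over a generic $\nu$ (i.e.\ over a Richardson element $\tilde f$ of the parabolic), the set of singular $\mu$'s has codimension $\ge 2$ in the fiber. The paper does this (case (A) of its proof) by reducing to the rank-$(\ell-2)$ reductive subalgebra $\mf{g}^{\tilde f}(0)\subseteq\mf{g}^e(0)$ and exhibiting a regular element there; this requires a further application of the unipotent action plus a dimension count, not just a rank inequality. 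Second, you defer the jump locus entirely to a hypothetical case-by-case check, but this is precisely where the real content lies. The paper's case (B) handles the codimension-$1$ stratum of $\mf{g}(1)^*$ (the zero locus of the $G_0$ semi-invariant) in a case-free way by invoking the optimality of the cocharacter $2\theta^\vee$ in the Kempf--Rousseau theory and Panyushev's observation that the pair $(h',h)$ induces a ${\rm G}_2$-type bigrading on $\mf{g}$; it then computes the stabilizer dimension as ${\rm ind}\,\mf{m}+2$ via a carefully constructed semidirect-product $\mf{m}\cong\mf{k}^e\ltimes\mf{g}^{f'}(-1,0)$ and a Levi $\mf{l}$ of rank $\ell-1$. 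Type ${\rm C}$ does not admit the $2\theta^\vee$-optimality and is handled separately (a direct Jacobian computation for ${\rm C}_2$ and an induction to the type-${\rm C}_{\ell-1}$ minimal nilpotent for $\ell\ge 3$). None of this is visible in your outline, and there is no evidence your proposed ``inspection of the $\mf{g}^\natural$-module $\mf{g}(1)$'' would close the gap without essentially reconstructing that machinery.

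A small additional remark: for $\mf{g}=\mf{sl}_3$ you invoke Section~\ref{sec:example}, but that section is about writing down the quantization explicitly in type ${\rm A}$, not about codimension of $(\mf{g}^e)^*_{\rm sing}$; in the paper type ${\rm A}$ is dispatched at the outset by citing Yakimova, and in your own framework the abelianness of $\mf{g}^\natural$ already makes the $c\ne 0$ singular set empty, so no appeal to Section~\ref{sec:example} is needed there.
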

In conjunction with \cite[Theorem~04]{PanPreYak06} 
this shows that our main result (Theorem~\ref{Th:Main}) is applicable to the minimal nilpotent centralizers $\mf{g}^e$ outside type ${\rm E}_8$. 

\begin{proof}[Proof of  Theorem~\ref{Th:codim3}].
Let $\{e,h,f\}$ be an $\mf{sl}_2$-triple with $e\in\mathcal{O}_{\rm min}$. 
The action of ${\rm ad}\,h$ on $\mf{g}$ gives rise to a
$\Z$-grading $$\mf{g}=\mf{g}(-2)\oplus\mf{g}(-1)\oplus\mf{g}(0)\oplus\mf{g}(1)
\oplus\mf{g}(2)$$ such that $\mf{g}(-2)=\C f$ and $\mf{g}(2)=\C e$. 
Also, $\mf{g}^e=\mf{g}^e(0)\oplus \mf{g}^e(1)\oplus \mf{g}^e(2)$ where $\mf{g}^e(0)$ is an ideal of codimension $1$ in $\mf{g}(0)$. We may assume that our symmetric invariant bilinear form $\kappa$ on $\mf{g}$ 
has the property that $\kappa(e,f)=1$. Then
$[x,y]=\langle x,y\rangle e$ for all
$x,y\in\mf{g}(1)$, where 
$ \langle x,y\rangle=\kappa(f,[x,y])$. Since $\mf{g}^f\cap \mf{g}(1)=\{0\}$ by the $\mf{sl}_2$-theory, the skew-symmetric form
$\langle \cdot\,,\,\cdot\rangle$ on $\mf{g}(1)$ 
is non-degenerate and 
$\mf{g}(1)\oplus \mf{g}(2)$ is isomorphic to a Heisenberg Lie algebra. 

The centralizer $G_0:=Z_G(h)$ is a Levi subgroup of $G$  
with ${\rm Lie}(G_0)=\mf{g}(0)$ and one can choose a maximal torus $T$ in $G_0$ 
and a basis of simple roots $\Pi$ in the root system 
$\Phi$ of $G$ with respect to $T$
in such a way that $e\in\mf{g}_\theta$ where $\theta$ is the highest root of the positive system $\Phi^+(\Pi)$. 
Thanks to Yakimova's result mentioned earlier we may
assume that $G$ is not of type ${\rm A}$. In this case it is well known (and easily seen) that 
$\mf{g}^e(0)=[\mf{g}(0),\mf{g}(0)]$ is a semisimple group and $\mf{g}(1)$ is an irreducible $\mf{g}^e(0)$-module. 
 
The bilinear form $\kappa$ enables us to identify the coadjoint $\mf{g}^e$-module $(\mf{g}^e)^*$
with the factor-module $\mf{g}/\big(\C h\oplus \mf{g}(1)\oplus \mf{g}(2)\big)$. Since the latter identifies with 
$\mf{g}(-2)\oplus\mf{g}(-1)\oplus \mf{g}^e(0)$ as vector spaces and $G_0$-modules, we shall often express linear functions $\chi\in(\mf{g}^e)^*$ in the form $$\chi=\alpha_\chi f+n_\chi+h_\chi \ \mbox{ with }\ \alpha_\chi\in\C,\,\, n_\chi\in\mf{g}(-1) \mbox{ and }\, h_\chi\in\mf{g}^e(0).$$ 
By \cite[p.~366]{PanPreYak06}, the hyperplane $\mathcal{H}:=\mf{g}(-1)\oplus \mf{g}^e(0)$ of $(\mf{g}^e)^*$ given by
the equation $\alpha_\chi=0$ is not contained in $(\mf{g}^e)^*_{\rm sing}$. 

Suppose for a contradiction that $(\mf{g}^e)^*_{\rm sing}$ contains an irreducible 
component $X$ of codimension $\le 2$ in $(\mf{g}^e)^*$.  
Let $N$ be the unipotent radical of the centralizer $G^e$.
If $X\not\subseteq \mathcal{H}$ then the coadjoint action of $N$ on the
the nonempty principal Zariski open subset
$X^\circ:=\{\chi\in X\,|\,\,\chi(e)\ne 0\}$ induces an isomorphism of affine algebraic varieties
$$X^\circ\,\cong\,(N/(N,N))\times 
\big(X^\circ \cap{\rm Ann}\,\mf{g}(1)\big);$$
see \cite[p.~366]{PanPreYak06} for detail. 
Identifying $(\mf{g}^e)^*$ with $\mf{g}/(\C h\oplus \mf{g}(1)\oplus \mf{g}(2)\big)$ we have that
${\rm Ann}\,\mf{g}(1)\,=\,
\C f+\mf{g}^e(0)$ and $X^\circ\cap {\rm Ann}\,\mf{g}(1)\,=\,\C^\times f\oplus X^\circ(0)$ for some nonempty 
locally closed subset $X^\circ(0)$ of $\mf{g}^e(0)$.
Since $\dim N/(N,N)=\dim \mf{g}^e(1)$, this subset  
has codimension $\ge 2$ in $\mf{g}^e(0)$.
Consequently, $X^\circ(0)$ contains a regular element of 
the semisimple Lie algebra $\mf{g}^e(0)$, say $r$. But if $\chi\in X$ is such that $\alpha_\chi\ne 0$, $n_\chi=0$ and $h_\chi=r$ then $(\mf{g}^e)^\chi$ consists of all $x_0+\lambda e$ with $\lambda\in\C$ and $x_0\in\mf{g}^e(0)$ such that $[x_0,r]=0$. So the stabiliser $(\mf{g}^e)^\chi$ has dimension
$1+(\ell-1)=\ell$. This contradiction shows that each  $\chi\in X$ vanishes on $e$, i.e. $X$ is a proper subset of $\mathcal{H}$. As we are assuming that $ (\mf{g}^e)^*_{\rm sing}$ has codimension $\le 2$ in $(\mf{g}^e)^*$ this means that $X$ is a hypersurface in $\mathcal{H}$.

Let $\pi\colon X\rightarrow \mf{g}(-1)$ denote the morphism induced by the canonical projection
$\mathcal{H}\twoheadrightarrow\, \mf{g}(-1)$. 
The theorem on fiber dimensions of a morphism then implies that $\pi$ is either dominant or
the Zariski closure of $\pi(\mathcal{H}_{\rm sing})$ has codimension $1$ in $\mf{g}(-1)$. Moreover, in the latter case the generic fibers of $\pi$ project onto $\mf{g}^e(0)$ while in the former case they project onto hypersurfaces in $\mf{g}^e(0)$.
Since $G_0$ is a connected group and $\mathcal{H}_{\rm sing}$ is $G_0$-stable, the sets $X$ and $\pi(X)$ are $G_0$-stable, too.

If $\chi=n_\chi+h_\chi\in \mathcal{H}$ then the stabilizer 
of $\chi$ in $\mf{g}^e$ consists of all
$x=x_0+x_1+x_2$ with $x_i\in\mf{g}^e(i)$ such that
\begin{equation}\label{stab}
x_0\in\mf{g}^{n_\chi}(0)\ \,\mbox{ and }\ \,[x_1,n_\chi]+
[x_0,h_\chi]\in \C h.
\end{equation}

We first consider the case where $\fing$ is not isomorphic to a Lie algebra of type $\rm C$ (in particular, we exclude the case where $\fing$ is of type ${\rm B}_2$). Applying \cite[Theorem~4.2]{Pan03} or analysing the Dynkin labels of nilpotent $G$-orbits as presented in
\cite[pp.~394--407]{Car85} one observes that in this case the cocharacter $2\theta^\vee\colon \C^\times \rightarrow T$ is optimal in the sense of the Kempf--Rousseau theory for
a nonempty Zariski open subset of $\mf{g}(1)$. It follows that there exists an $\mathfrak{sl}_2$-triple $\{\tilde{e},2h,\tilde{f}\}\subset \mf{g}$ such that
$\tilde{f}\in \mf{g}(-1)$, $\tilde{e}\in \mf{g}(1)$ and the adjoint $G_0$ orbits of $\tilde{e}$ and $\tilde{f}$ are Zariski open in $\mf{g}(1)$ and $\mf{g}(-1)$, respectively.

\medskip

\noindent (A) Suppose $\pi\colon X\rightarrow \mf{g}(-1)$ is a dominant morphism. Then $\pi(X)$ contains a Richardson element 
of the parabolic subalgebra $\mf{g}(0)\oplus \mf{g}(-1)\oplus \mf{g}(-2)$ of $\mf{g}$, say $\tilde{f}$. 
Since all such elements in $\mf{g}(-1)$ are conjugate under the adjoint action of $G_0$, our earlier remark shows that there exists $\tilde{e}\in\mf{g}(1)$ such that $\{\tilde{e}, 2h, \tilde{f}\}$ is an $\mf{sl}_2$-triple in $\mf{g}$. By the $\mf{sl}_2$-theory, the Lie algebra $\mf{g}^{\tilde{f}}(0)=\mf{g}^{\tilde{f}}\cap \mf{g}^h$ is reductive and 
\begin{equation}\label{sum}
\mf{g}(0)=\mf{g}^{\tilde{f}}(0)\oplus [\tilde{f},\mf{g}(1)].
\end{equation} Since non-isomorphic irreducible 
$\mathfrak{sl}_2$-submodules of $\mf{g}$ are orthogonal to each other with respect to $\kappa$, it must be that
$\kappa(z,h)=0$ for all $z\in \mf{g}^{\tilde{f}}(0)$. From this it follows that $\mf{g}^{\tilde{f}}(0)\subseteq [\mf{g}(0),\mf{g}(0)]=\mf{g}^e(0)$.

Our standing hypothesis yields that $X$ contains an element $\chi$ with $n_\chi=\tilde{f}$ and  
$\pi^{-1}(\tilde{f})=\tilde{f}+Y$ for some hypersurface
$Y$ in $\mf{g}^e(0)$. 
Applying to $\chi$ a suitable automorphism $\exp({\rm ad}^*\,y)\in {\rm Ad}^*\,G^e$ with 
$y\in \mf{g}(1)$ and taking (\ref{sum}) into account we may assume further that 
$h_\chi\in \mf{g}^{\tilde{f}}(0)$. 
This entails that $Y$ intersects with the subalgebra $\mf{g}^{\tilde{f}}(0)$ of $\mf{g}^e(0)$. By the affine dimension theorem, all irreducible components of that intersection have codimension $\le 1$ in $\mf{g}^{\tilde{f}}(0)$.

The structure of the Lie algebra $\mf{g}^{\tilde{f}}(0)$
is described in \cite[Tables~2 and 3]{PanPreYak06}. If $\mf{g}$ has type other than ${\rm G}_2$, ${\rm B}_3$ or ${\rm D}_4$ then the semisimple rank of $\mf{g}^{\tilde{f}}(0)$
is $\ge 1$ and in the three excluded cases $\mf{g}^{\tilde{f}}(0)$ is a torus of dimension $0$, $1$ and $2$, respectively. In any event, $Y\cap \mf{g}^{\tilde{f}}(0)$ contains a regular element of $\mf{g}^{\tilde{f}}(0)$; we call it $r'$ (if $\mf{g}$ has type ${\rm G}_2$ then necessarily $r'=0$). 

Taking $\chi\in X$ with $n_\chi=\tilde{f}$ and $h_\chi=r'$ and using (\ref{stab}) it is straightforward to see
that $x_0+x_1+x_2\in (\mf{g}^e)^\chi$ if and only if $x_0$ lies in the centralizer of $r'$ in $\mf{g}^{\tilde{f}}(0)$ and $[x_1,\tilde{f}]\in\C h$. Since the map ${\rm ad}\,\tilde{f}\colon \mf{g}(1)\rightarrow \mf{g}(0)$ is injective, this gives
$x_1\in\C \tilde{e}$.
Since $\mf{g}^{\tilde{f}}(0)$ has rank $\ell -2$ in all cases (by \cite[3.9]{PanPreYak06}) and $x_2\in \C e$ we now conclude that the stabilizer of $\chi$ in $\mf{g}^e$ has dimension $\ell$. As this contradicts out assumption that $\chi\in (\mf{g}^e)^*_{\rm sing}$ we thus deduce that the present case cannot occur.

\medskip

\noindent
(B) Now suppose that the Zariski closure of 
$\pi(X)$ has codimension $1$ in $\mf{g}(-1)$.
This case is more complicated, and we are fortunate that some related work has been done by Panyushev in \cite[Sect.~4]{Pan03}. Given a subspace $V$ in $\mf{g}(i)$ we denote by $V^\perp(-i)$ the orthogonal complement of $V$ in $\mf{g}(-i)$ with respect to $\kappa$. Since $2\theta^\vee$ is an optimal cocharacter
for a nonempty Zariski open subset of $\mf{g}(-1)$ the coordinate ring  $\C[\mf{g}(-1)]$ contains a homogeneous 
function of positive degree which is semi-invariant under the adjoint action of $G_0$; we call it $\varphi$. Since $G_0$ acts on $\mf{g}(-1)$ with finitely many 
orbits, the zero locus of $\varphi$ contains an open
$G_0$-orbit $\mathcal{O}'_0$. This orbit is actually unique by  
\cite[Theorem~4.2(ii)]{Pan03}. We pick an element $f'\in\mathcal{O}'_0$.
Since $[\mf{g}(0),f']$ has codimension $1$ in
$\mf{g}(-1)$ by our choice of $e'$ there is a nonzero
$u\in \mf{g}^{f'}(1)$ such that $\mf{g}^{f'}(1)=[\mf{g}(0),f']^\perp(1)=\C u$. Since $\mf{g}(-1)\oplus \mf{g}(-2)$ is a Heisenberg Lie algebra, $\mf{g}^{f'}(-1)$ has codimension $1$ in $\mf{g}(-1)$. Also, $\dim 
\mf{g}^{f'}(0)=\dim \mf{g}(0)-\dim \mf{g}(-1) +1$ and 
$\mf{g}^{f'}(2)=\{0\}$. From this it is immediate that 
$\mf{g}^{f'}=\mf{g}(-2)\oplus \mf{g}^{f'}(-1)\oplus \mf{g}^{f'}(0)\oplus \C u$ and 
$$\dim \mf{g}^{f'}= \dim \mf{g}(0)+2=(\dim\mf{g}^{\tilde{f}})+2.$$ 

Since the semisimple element $h$ normalizes the line $\C f'$ it lies in a Cartan subalgebra of $\mf{g}$ contained in the optimal parabolic subalgebra of the 
$G$-unstable vector $e'\in\mf{g}$. By
the Kempf--Rousseau theory, this Cartan subalgebra contains an element $h'$ with the property  that $\{e',h',f'\}$ is an $\mf{sl}_2$-triple of $\mf{g}$.
Since $[h,h']=0$ we have that $h'\in\mf{g}(0)$.
Writing $e'=\textstyle{\sum}_i e'_i$ with $e'_i\in\mf{g}(i)$ and taking into account the fact that $[h', e']=2e'$ and $[h,\mf{g}(i)]\subseteq \mf{g}(i)$ for all $i$, one observes that
$[h',e'_1]=2e_1'$. As $[e',f']=h=[e'_1,f']$ it follows that
$\{e_1',h',f'\}$ is an $\mf{sl}_2$-triple of $\mf{g}$. So we may assume without loss that
$e'\in\mf{g}(1)$. 

For $i,j\in\Z$ define $\mf{g}(i,j)=\{x\in\mf{g}(j)\,|\,\,
[h',x]=ix\}$. By the 
$\mf{sl}_2$-theory, the Lie algebra $\mf{g}$ thus acquires a bi-grading $\mf{g}=\bigoplus_{i,j\in\Z}\,\mf{g}(i,j)$.  By 
\cite[4.6]{Pan03}, it endows $\mf{g}$ with a symmetry of type ${\rm G}_2$. Specifically, the set $\Sigma'$ of all $(i,j)\ne (0,0)$ with $\mf{g}(i,j)\ne \{0\}$ forms a root system of type ${\rm G}_2$ in
$\mathbb{R}^2$. More precisely,
$\mf{g}(\pm 2)=\mf{g}(\pm 3,\pm 2)$,
$\mf{g}(0)=\mf{g}(-1,0)\oplus\mf{g}(0,0)\oplus \mf{g}(1,0)$, and
$$\mf{g}(\pm 1)\,=\,\mf{g}(\pm 3,\pm 1)\oplus \mf{g}(\pm 2,\pm 1)\oplus \mf{g}(\pm 1,\pm 1)\oplus \mf{g}(0,\pm 1).$$
Furthermore, the subspaces $\mf{g}(i,j)$ corresponding to the long roots of $\Sigma'$ have dimension $1$ whereas all $\mf{g}(i,j)$'s labelled by the short roots of $\Sigma'$ have the same dimension $a$ depending on the type of $\mf{g}$. We shall see later that $a+3=h^\vee$, the dual Coxeter number of $\mf{g}$.

Note that $\mf{g}(0,1)=\mf{g}^{f'}(1)=\C u$ and $\mf{g}(0,-1)=\mf{g}^{e'}(-1)$. Also, $$\mf{g}^{f'}(0)\cap \mf{g}^{e'}(0)=\mf{g}^{f'}(0,0)=\mf{g}^{e'}(0,0)\ \mbox{ and } 
\ \mf{g}(0)=\mf{g}^{f'}(0,0)\oplus [e',\mf{g}(-2,-1)]$$
by the $\mf{sl}_2$-theory. We set $\mf{k}:=\mf{g}^{f'}(0,0)$ and $\mf{p}:=[e',\mf{g}(-2,-1)]$.
As $({\rm ad}\,e')^3$ annihilates the abelian subalgebra $\mf{g}(-2,-1))$ of $\mf{g}$, we have that 
\begin{eqnarray*}
0&=&\textstyle{\frac{1}{3}}({\rm ad}\, e')^3([x,y])=[({\rm ad}\, e')^2(x),({\rm ad}\, e')(y)]+[({\rm ad}\, e')(x),({\rm ad}\, e')^2(y)]\\
&=&\big[e',[[e',x],[e',y]]\big]=[e',[\mf{p},\mf{p}]]
\end{eqnarray*}
for all $x,y\in\mf{g}(-2,-1)$. Consequently, $[\mf{p},\mf{p}]\subseteq \mf{k}$ implying that $\mf{g}(0)=\mf{k}\oplus\mf{p}$ is a symmetric decomposition of the Levi subalgebra $\mf{g}(0)$ of $\mf{g}$. Note that the restriction of $\kappa$ to $\mf{k}$ is non-degenerate and 
$\mf{p}=[e',\mf{g}(-2,-1)]=\mf{k}^\perp=[f',\mf{g}(2,1)]$ by dimension reasons. Since
${\rm ad}\,f'$ is injective on $\mf{g}(1,0)$ by the $\mf{sl}_2$-theory and
$[f',\mf{g}(-1,0)]=\mf{g}(-3,-1)$ is $1$-dimensional, the
ideal $\mf{g}^{f'}(-1,0)$ of $\mf{g}^{f'}(0)$ has codimension $1$ in $\mf{g}(-1,0)$ and
\begin{equation}\label{0-cen} 
\mf{g}^{f'}(0)=\mf{k}\oplus \mf{g}^{f'}(-1,0).
\end{equation}

Recall that in the present case we can take $\chi\in X$ with $n_\chi=f'$ while our choice of 
$h_\chi\in \mf{g}^e(0)$ is unconstrained. Since $\mf{g}(3,1)$ is spanned by $({\rm ad}\,e')^3(f)$, our earlier remarks in this part yield 
$$[f',\mf{g}(1)]=\C [e',[e',f]]\oplus\mf{g}(-1,0)\oplus\mf{p}.$$  
Applying to $\chi=f'+h_\chi$ a suitable automorphism $\exp({\rm ad}^*\,y)\in {\rm Ad}^*\,G^e$ with $y\in \mf{g}(1)$ we may thus assume that $h_\chi=h_{\chi,0}+h_{\chi,1}$ for some $h_{\chi,0}\in\mf{k}$ and $h_{\chi,1}\in 
\mf{g}^{e'}(1,0)$. Note that $\kappa$ gives rise to 
a perfect pairing between $\mf{g}^{f'}(0)=\mf{k}\oplus \mf{g}^{f'}(-1,0)$ and $\mf{g}^{e'}(0)=\mf{k}\oplus \mf{g}^{e'}(1,0)$.

Set $\mf{m}:=\mf{k}^e\oplus\mf{g}^{f'}(-1,0)=\mf{g}^{f'}(0)\cap\mf{g}^e(0)$, an ideal of codimension $1$
in $\mf{g}^{f'}(0)$, and denote by $\bar{\chi}$ the linear function on $\mf{m}$ given by $\bar{\chi}(y)=\kappa(h_\chi,y)$ for all $y\in \mf{m}$.
Our preceding remarks imply that $h_\chi$ can be chosen in such a way that $\dim (\mf{m}^*)^{\bar{\chi}}={\rm ind}\,\mf{m}$ where, as before, ${\rm ind\,}\mf{m}$ denotes the index of the Lie algebra $\mf{m}$.
If $x=x_0+x_1+x_2$ with $x_i\in\mf{g}^e(i)$ lies $(\mf{g}^e)^\chi$ then
(\ref{stab}) yields that $x_0=x_{0,0}+x_{-1,0}$ for some $x_{0,0}\in 
\mf{k}^e$ and $x_{1,-0}\in  \mf{g}^{f'}(-1)$.
Moreover,
$$[h_{\chi,0},x_{0,0}]+[h_{\chi,1},x_{-1,0}]+
[h_{\chi,0},x_{-1,0}]+[h_{\chi,1},x_{0,0}]+[f',x_1]\in \C h.$$ Then 
$\kappa([h_{\chi,0}+h_{\chi,1},x_{0,0}+
x_{0,-1}],y_{0,0}+y_{0,-1})=0$ for all
$y_{0,0}\in\mf{k}^e$ and $y_{-1,0}\in\mf{g}^{f'}(-1,0)$ forcing $x_0=x_{0,0}+x_{0,-1}\in\mf{m}^{\bar{\chi}}$. Since $\mf{g}(-1,0)\subset [f',\mf{g}(1)]$, this also shows that for any $x_0\in\mf{m}^{\bar{\chi}}$ there is $x_1\in\mf{g}(1)$ such that $x_0+x_1\in(\mf{g}^e)^\chi$.
As a consequence, the canonical projection $\mf{g}^e\twoheadrightarrow \mf{g}^e(0)$ gives rise to a surjective Lie algebra homomorphism $\psi\colon(\mf{g}^e)^\chi\twoheadrightarrow \mf{m}^{\bar{\chi}}$ whose kernel consists of all $x_1+x_2$ with $x_i\in\mf{g}(i)$ and $[f',x_1]\in\C h$.

Let $K = 2h^\vee \kappa$ be the Killing form of $\mf{g}$. It is immediate from our earlier remarks that $4h^\vee=K(h,h)=2(2a+2)+8=4(a+3)$ and $K(h,h')=2(3a+3)+12=6(a+3)=6h^\vee$
(here $h^\vee$ is the dual Coxeter number of $\mf{g}$).
Therefore, $\kappa(h',h)=3$. Similarly, 
$K(h',h')=2(5a+9)+2a+18=12(a+3)$, so that $\kappa(h',h')=6$.
If $h=[f',v]$ for some $v\in\mf{g}$ then
$\kappa(h,h)=\kappa([h,f'],v)=-\kappa(f',v)$ and $\kappa(h',h)=\kappa([h',f'],v])=-2\kappa(f',v)$ implying $\kappa(h',h)=4$. This contradiction shows that
${\rm Ker}\,\psi=\mf{g}^{f'}(1)\oplus \mf{g}(2)=\C u\oplus \C e$ is $2$-dimensional. As a result,
\begin{equation}\label{ind}
\dim(\mf{g}^e)^\chi={\rm ind}\,\mf{m}+2.
\end{equation}
Next we observe that $f'$ lies in $\mf{g}^{h'-2h}=\mf{g}(-2,-1)\oplus\mf{g}(0,0)\oplus\mf{g}(2,1)$, a Levi subalgebra of $\mf{g}$. Let $\mf{l}$ denote the orthogonal complement of $h'-2h$ in $\mf{g}^{h'-2h}$. 
As $\kappa(h'-2h,h'-2h)=6-12+8=2$ we have that $
\mf{g}^{h'-2h}=\mf{l}\oplus \C (h'-2h)$. In particular,
$\mf{l}$ is a reductive subalgebra of $\mf{g}$ of rank $\ell-1$.
In order to compute the index of $\mf{m}$ in a case-free fashion we shall identify $\mf{m}$ with a subalgebra of $\mf{l}^{f'}$. 

We first recall tat $\mf{g}^{f'}(-1,0)$ is an abelian ideal of $\mf{m}$, so that $\mf{m}\cong 
\mf{k}^e\ltimes \mf{g}^{f'}(-1,0)$ as Lie algebras. Since $\mf{k}^e=\mf{k}\cap [\mf{g}(0),\mf{g}(0)]$ is orthogonal to both $h'$ and $h$ by the $\mf{sl}_2$-theory, it is a subalgebra of $\mf{l}^{f'}(0,0)$. From this it is immediate that $\mf{l}=\mf{k}^e\oplus \mf{g}(-2,-1).$ Since 
$[f,[e',\mf{g}(-1,0)]]=\mf{g}(-2,-1)$ by the $\mf{sl}_2$-theory and both $f$ and $e'$ commute with $\mf{k}^e$, the $\mf{k}^e$-modules $\mf{g}(-1,0)$ and $\mf{g}(-2,-1)$ are isomorphic. If $f'\in [f,[e',\mf{g}^{f'}(-1,0)]]$ then
$$3=\textstyle{\frac{1}{2}}\kappa(h',h')=\kappa(e',f')=
\kappa(e',[f,[e',w]])=-\kappa([e',[e',f]],w)$$ for some $w\in\mf{g}^{f'}$. But since $[h',e]=3e$ and $[e',e]=0$
the $\mf{sl}_2$-theory also yields $[e',[e',f]]\in {\rm Im}\,{\rm ad}\,f'$. Then $\kappa([e',[e',f]],w)=0$. This contradiction shows that
$\mf{g}(-2,-1)=[f,[e',\mf{g}^{f'}(-1,0)]]\oplus \C f'$ as $\mf{k}^e$-modules and $$\mf{m}\cong \mf{k}^e\ltimes 
\mf{g}^{f'}(-1,0)\cong \mf{k}^e\ltimes [f,[e',\mf{g}^{f'}(-1,0)]]$$ identifies with an ideal of codimansion $1$ in $\mf{l}^{f'}$ complementary to $\C f'$; we call it $\mf{m}'$. Since ${\rm ind}\,\mf{l}^{f'}={\rm rk}\,\mf{l}=\ell-1$ by \cite[Theorem~3.5]{Pan03}, for example, and $\mf{l}^{f'}=\mf{m}'\oplus \C f'$, we now deduce that ${\rm ind}\,\mf{m}={\rm ind}\,\mf{m}'=\ell-2$. But then
(\ref{ind}) gives $\dim(\mf{g}^e)^\chi=\ell$ contrary to our choice of $\chi$. This contradiction proves the theorem for all simple Lie algebras of type other than $\rm C$.

\medskip

\noindent
(C) Finally, let $G$ be of type ${\rm C}_\ell$ and suppose first that $\ell=2$. In this case it follows from the main results of
\cite{PanPreYak06} that the $\C$-algebra $S(\mf{g}^e)^{\mf{g}^e}$ is freely generated by two homogeneous invariants $^{e}\!F_1$ and $^{e}\!F_2$ of degree $1$ and $3$, and $(\mf{g}^e)^*_{\rm sing}$ coincides with their Jacobian locus $\mathcal{J}(^{e}\!F_1, ^{e}\!F_2)$. The centraliser $\mf{g}^e$ has a $\C$-basis $\{E,H,F,u,v,e\}$ such that $\{E,H,F\}$ is an $\mf{sl}_2$-triple in $\mf{g}^e(0)$ and $u,v\in \mf{g}^e(1)$ have the property that 
$[E,v]=u$, $[F,u]=v$ and $[u,v]=e$. Using this basis it is not hard to describe the invariant ring $S(\mf{g}^e)^{\mf{g}^e}$ explicitly. Namely, we may assume without loss of generality that $^{e}\!F_1=e$ and  
$$^{e}\!F_2=(4EF+H^2)e+2Ev^2+2uvH-2Fu^2.$$ As $\deg(^{e}\!F_1)=1$, it is then straightforward to see that $\mathcal{J}(^{e}\!F_1, ^{e}\!F_2)$ is isomorphic to the intersections of five quadrics 
$$2yt+q^2=0,\ \, 2xt-p^2=0,\ \,qz-2yp=0,\ \,2xq+pz=0,\
\,  zt+pq=0$$ in the affine space $\mathbb{A}^6$ with coordinates $x,y,z, p,q, t$. This variety is nothing but
$$\{(p^2/2t,-q^2/2t,-pq/t,p,q,t)\,|\,\,p,q\in\C,\,t\in\C^\times\}\sqcup\{(x,y,z,0,0,0)\,|\,\,x,y,z\in\C\}.$$ We thus deduce that  $(\mf{g}^e)^*_{\rm sing}$
has two irreducible components both of which are $3$-dimensional. So the statement holds for $\ell=2$.

Now suppose $\ell\ge 3$. At the beginning of this proof we have assumed for a contardiction that an irreducible hypersurface $X$ of $\mathcal{H}$ is contained in $(\mf{g}^e)^*_{\rm sing}$. Recall that $N$ denotes the unipotent radical of $G^e$ and
$\pi\colon\, X\rightarrow \mf{g}(-1)$ is the map induced by the second projection $\mf{g}^e(0)\oplus\mf{g}(-1)\twoheadrightarrow \mf{g}(-1)$.
Since in the present case the derived subgroup of $G_0$ acts transitively on the nonzero vectors of the $2(\ell-1)$-dimensional vector space $\mf{g}(-1)$
the morphism $\pi$ is surjective and $\dim \pi^{-1}(v)=(\dim X)-2(\ell-1)$ for any nonzero $v\in \mf{g}(-1)$. 

Using the standard realisation of the root system $\Phi$ we may assume that $\theta=2\varepsilon_1$ and the Lie algebra $\mf{g}^e(0)$ is generated 
by all root vectors $e_\gamma$ and $f_\gamma$, where $\gamma =\varepsilon_i\pm\varepsilon_j$ and $2\le i<j\le \ell$. We may (and will) take for $v$ a short root vector 
$f_{\varepsilon_1-\varepsilon_2}$. 
Set $\mf{l}=\mf{g}^e(0)$, a simple Lie algebra of type
${\rm C}_{\ell-1}$. The adjoint action of $h_v:=h_{\varepsilon_2-\varepsilon_1}$ on $\mf{l}$
gives rise to  a $\Z$-grading $$\mf{l}=\mf{l}(-2)\oplus\mf{l}(-1)\oplus\mf{l}(0)\oplus\mf{l}(1)\oplus\mf{l}(2)$$ such that $\mf{l}(2)=\C e_{2\varepsilon_2}$ and $[v,\mf{g}(1)]=\C h_v\oplus \mf{l}(1)\oplus\mf{l}(2)$. As $h_{\varepsilon_2-\varepsilon_1}-h_{2\varepsilon_2}=-h$, the same grading of $\mf{l}$ is induced by the adjoint action of $h_{2\varepsilon_2}$. More importantly,
$v=f_{\varepsilon_1-\varepsilon_2}$ and $e_0:=e_{2\varepsilon_2}$ have the same centralizer in $\mf{l}$, namely, $[\mf{l}(0),\mf{l}(0)]\oplus\mf{l}(1)\oplus\mf{l}(2)$.

Let $S$ be the set of all linear functions on $\mf{g}^e$
that vanish on $\mf{g}(2)\oplus\mf{g}(1)\oplus\mf{l}(-2)\oplus\mf{l}(-1)\oplus\C h_{2\varepsilon_2}$. This subspace of $(\mf{g}^e)^*$ is canonically isomorphic to 
the dual space of the minimal nilpotent centralizer $\mf{l}^{e_0}$ of type ${\rm C}_{\ell-1}$, Our discussion in the previous paragraph implies that the coadjoint action of $N$ gives rise to an isomorphism 
$$\pi^{-1}(v)\,\cong\,\big(N/(N,N)\big)\times \big(v+
V\cap X\big)$$ of affine algebraic varieties. As a consequence,
\begin{eqnarray*}
\dim(S\cap X)&=&\dim \pi^{-1}(v)-2(\ell-1) =\dim X-4(\ell-1)\\
&=&\dim \mf{g}^e-4(\ell-1)-2=\dim\mf{l}-2(\ell-2)-3\\
&=&\dim \mf{l}(0)+\dim\mf{l}(1)+\dim\mf{l}(2)-2
\,=\,\dim \mf{l}^{e_0}-1.
\end{eqnarray*}
Therefore, $S\cap X$ has codimension $1$ in $(\mf{g}^{e_0})^*$. By \cite[Theorem~04]{PanPreYak06}, there exists $\xi\in S\cap X$  with
$\dim (\mf{l}^{e_0})^{\bar{\xi}}=\ell-1$, where
$\bar{\xi}$ is the restriction of $\xi$ to
$\mf{l}^{e_0}$
(here we use the fact that $\ell\ge 3$). 
Since $\mf{l}^{e_0}=\mf{l}^v$ it follows
from the definition of $V$ and the preceding discussion that $x=x_0+x_1+x_2$ with $x_i\in\mf{g}^e(i)$ lies in
$(\mf{g}^e)^\xi$ if and only if $x_0\in (\mf{l}^{e_0})^{\bar{\xi}}$ and $x_1=0$.
But then $\dim (\mf{g}^e)^\xi=(\ell-1)+1=\ell$ violating our assumption that $X\subseteq (\mf{g}^e)^*_{\rm sing}$.
This contardiction completes the proof of Theorem~\ref{Th:codim3}.
\end{proof}
 \begin{Rem}
 If $\mf{g}$ has type other than ${\rm A}$ or ${\rm E}_8$ then combining 
 Theorem~\ref{Th:codim3} with \cite[Theorem~04]{PanPreYak06} and \cite[Proposition~1.6]{OoVdB10} one obtains that {\it all} irreducible components of $(\mf{g}^e)^*_{\rm sing}$ have codimension $3$ in $(\mf{g}^e)^*$. We stress that \cite[Proposition~1.6]{OoVdB10} applies because outside type ${\rm A}$ the Lie algebra $\mf{g}^e$ is perfect and hence all 
 semi-invariants of $S(\mf{g}^e)$ under the action of $\mf{g}^e$ lie in $S(\mf{g}^e)^{\mf{g}^e}$.
If $\mf{g}$ is of type ${\rm E}_8$ then \cite[Theorem~04]{PanPreYak06} and \cite[Proposition~1.6]{OoVdB10} are no longer applicable,
but according to an unpublished result of Yakimova it is still true that
 $(\mf{g}^e)^*_{\rm sing}$ has codimension $3$ in
$(\mf{g}^e)^*$.
\end{Rem}
\section{Some explicit formulae in type $A$: the minimal nilpotent case}
\label{sec:example}
Let $\fing=\mf{gl}_n$,
and let $e_{ij}$ be a standard 
basis element of $\fing$.
Let $\affg_-\+\C \tau$ be the extended Lie algebra
with $\tau$ acting on $\affg$ by the rule
\begin{align*}
 [\tau,x_{(-m)}]=mx_{(-m)}.
\end{align*}
For an $n\times n$ matrix $A=(a_{ij})$ with entries in an associative ring,
denote by
$\on{cdet}A$  the column determinant
defined by the formula
\begin{align*}
 \on{cdet}A=\sum_{\sigma\in\mf{S}_n}\on{sgn}(\sigma)a_{\sigma(1)1}\dots a_{\sigma(n)n}.
\end{align*}
Let  $ \tau+E_{(-1)}$ be the $n\times n$ matrix with entries in
$ \affg_-\*\C\tau$ given by
\begin{align*}
 \tau+E_{(-1)}
=
\begin{pmatrix}
 \tau+(e_{11})_{-1}&(e_{12})_{(-1)}&\dots &(e_{1n})_{(-1)}\\
(e_{21})_{-1}& \tau+(e_{22})_{(-1)}&\dots &(e_{2n})_{(-1)}\\
\vdots &\cdots &\ddots &\vdots \\
(e_{n1})_{-1}& (e_{n2})_{(-1)}&\dots &\tau+(e_{nn})_{(-1)}
\end{pmatrix}.
\end{align*}

Let $e=e_{n,n-1}$, a
 minimal nilpotent element of $\fing$, and put
$$\fing_2=\on{span}\{e_{n,i}; 1\leq i\leq n-1\},\ \ 
\fing_{-2}=\on{span}\{e_{i,n}; 1\leq i\leq n-1\},$$ and
$\fing_0=\on{span}\{e_{ij};1\leq i,j\leq n-1\}
+\on{span}\{e_{n,n}\}$,
Then 
\begin{align*}
\fing=\fing_{-2}\+\fing\+\fing_{2}
\end{align*}
 defines a good even 
grading for $e$. In particular,  $\fing^e=\fing_0^e\oplus \fing_2$.
Denote by $\kappa_{e,c}$ the
invariant symmetric bilinear form 
on $\fing^e$
such that
\begin{align*}
 \kappa_{e,c}(x,y)=\begin{cases}
-\on{tr}_{\fing_0}(\ad x \ad y)&\text{for }x,y\in \fing_0^e,\\
0
&\text{else. }\\
		   \end{cases}
\end{align*}
Then $\kappa_{e,c}$ coincides the form
defined in Theorem \ref{Th:loop-filtration}
on $\mf{sl}_n^e$.
We have
$V^{\kappa_{e,c}}(\fing^e)=V^{\kappa_{e,c}}(\mf{sl}_n^e)\* \pi$,
where
$\pi$ is the commutative vertex algebra
that is
freely generated by one element 
$I_{(-1)}\mathbb{I}$,
$
 I:=e_{11}+e_{22}+e_{33}+\dots +e_{nn}
$.
Therefore,
\begin{align*}
Z(V^{\kappa_{e,c}}(\fing^e))=Z(V^{\kappa_{e,c}}(\mf{sl}_n))\* \pi
\end{align*}
and
 there is no harm in replacing $\mf{sl}_n$ with $\fing$.

Note that
$\dim \fing^e=\dim \fing_0=(n-1)^2+1$.
We have
\begin{align*}
 \fing^e=\on{span}\{I,\ e_{i,j}\ (1\leq i\leq n-2,
1\leq j\leq n-1),\
e_{n,j}\ (1\leq j\leq n-1)\}.
\end{align*}

Consider the $(n-1)\times (n-1)$ matrix
$Z$ 
obtained from
$\tau+E_{(-1)}$ by removing its
$(n-1)$-th row and $n$-th column. Then
\begin{align*}
 Z=\begin{pmatrix}
    \tau +(e_{11})_{(-1)}
&(e_{12})_{(-1)}&\dots & (e_{1\ n-2})_{(-1)} &(e_{1\ n-1})_{(-1)}
\\ 
(e_{21})_{(-1)} &     \tau +(e_{22})_{(-1)} &\dots &
(e_{2\ n-2})_{(-1)} &(e_{2\ n-1})_{(-1)}\\ 
\vdots & \vdots & \ddots& \vdots  &\vdots \\
(e_{n-2 \ 1})_{(-1)} &(e_{n-2 \ 2})_{(-1)} 
& \dots &    \tau +(e_{n-2\ n-2})_{(-1)}  &(e_{n-2\ n-1})_{(-1)}\\ 
(e_{n~1})_{(-1)} &(e_{n~ 2})_{(-1)} 
& \dots &    (e_{n\ n-2})_{(-1)}  &(e_{n\ n-1})_{(-1)}
   \end{pmatrix}
\end{align*}
The
 entries of $Z$
belong to the subalgebra
$\affg_-^e\+ \C \tau$
of $\affg_-\+ \C \tau$,
and so
its column determinant $\on{cdet}Z$ is an element of $U(\affg^e_-\+ \C
\tau)$. We may also regard it as an element of $V^{\kappa_{e,c}}(\fing^e)\*
\C[\tau]$.

Arguing as in \cite{CheMol09} one proves the following assertion:
\begin{Th} The $\C[T]$-module
 $Z(V^{\kappa_{e,c}}(\fing^e))$ 
is freely generated by
 $I_{(-1)}$ and  
the coefficient $Q_1,\dots, Q_{n-1}$
of the polynomial
\begin{align*}
 \on{cdet}(Z)=Q_1 \tau^{n-2}+Q_2\tau^{n-3}+
\dots Q_{n-2}\tau+Q_{n-1},
\end{align*}
so that
$Z(V^{\kappa_{e,c}}(\fing^e))\,=\,\C[T^j I_{(-1)},
T^j Q_i;
1\leq i\leq n-1, j\in\Z_{\ge 0}]$.
\end{Th}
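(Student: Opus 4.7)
The plan is to adapt the Chervov--Molev argument of \cite{CheMol09}, which proves the analogous generation statement for the full Feigin--Frenkel center $Z(V^{\kappa_c}(\mf{gl}_n))$ via column determinants of $\tau+E_{(-1)}$, to the centralizer vertex algebra $V^{\kappa_{e,c}}(\fing^e)$. The argument will split into three independent pieces: (i) centrality of $I_{(-1)}$ and of the coefficients $Q_1,\dots,Q_{n-1}$ of $\on{cdet}(Z)$; (ii) identification of their Li-filtration symbols with a lift of a good generating system $\{{}^e\!P_i\}$ on $\fing^e$; and (iii) promotion to free generation as a $\C[T]$-algebra by invoking Theorem~\ref{Th:FF-theorem-for-centralizer}.

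Centrality of $I_{(-1)}\mathbb{I}$ is immediate, since $I$ is central in $\fing^e$ and $\kappa_{e,c}(I,\,\cdot\,)=0$ by construction of $\kappa_{e,c}$. For $\on{cdet}(Z)$, the task reduces via the commutator formula to verifying $a_{(0)}\on{cdet}(Z)=a_{(1)}\on{cdet}(Z)=0$ for every $a\in\fing^e$. I would compute these by expanding $a_{(k)}$ across the columns of $Z$ in Leibniz fashion and then applying column-permutation identities exactly as in \cite{CheMol09}. Two structural features make the transfer work: the first $n-2$ diagonal entries of $Z$ carry the $\tau$-derivation and obey the same Manin-matrix-type quadratic relations used in \emph{loc.\ cit.}, while the last row sits entirely inside the abelian subspace $\fing_2\subset\fing^e$, so its entries $(e_{n,j})_{(-1)}$ commute among themselves and behave combinatorially like a row of scalars in the determinant expansion. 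The precise normalization of $\kappa_{e,c}$ supplied by Theorem~\ref{Th:loop-filtration} is what forces the central-term contributions in the affine OPE's to cancel.

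For the symbol step I would take $P_i=\on{tr}(E^i)$, $i=1,\dots,n$, as a homogeneous generating system of $S(\mf{gl}_n)^{\mf{gl}_n}$. Direct inspection on the Slodowy slice $e+\fing^f$ gives $\deg{}^e\!P_1=1$ and $\deg{}^e\!P_i=i-1$ for $i\ge 2$, so $\sum_i\deg{}^e\!P_i=1+n(n-1)/2=(\dim\fing^e+\ell)/2$ and $\{P_i\}$ is good for $e$. Reading off the leading Li-symbols from $\on{cdet}(Z)=\sum_{i=1}^{n-1}Q_i\tau^{n-1-i}$ under the identification $\gr V^{\kappa_{e,c}}(\fing^e)\cong S(\affg^e_-)$ of \eqref{eq:case-of-affine}, the top $\tau$-coefficient at level $n-1-i$ is a polynomial in the first $n-2$ diagonal entries multiplied by one factor from the bottom row, which one recognizes as the initial term of $\on{tr}(E^{i+1})|_{e+\fing^f}$ in the sense of \cite{PanPreYak06}. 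This yields $\sigma(I_{(-1)})={}^e\!P_1$ and $\sigma(Q_i)={}^e\!P_{i+1}$ up to nonzero scalars. Theorem~\ref{Th:FF-theorem-for-centralizer} then identifies $Z(V^{\kappa_{e,c}}(\fing^e))$ with the polynomial algebra in the $T^j\,{}^e\!\hat P_i$, and since $\{I_{(-1)},Q_1,\dots,Q_{n-1}\}$ is a choice of central lifts of the ${}^e\!P_j$, the theorem follows.

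The step I expect to be the main obstacle is the centrality verification for $\on{cdet}(Z)$: while the overall blueprint is that of Chervov--Molev, the nonstandard bottom row of $Z$ and the unfamiliar bilinear form $\kappa_{e,c}$ demand a careful re-derivation of their key commutation identities inside $V^{\kappa_{e,c}}(\fing^e)$, and one has to check that every central-extension contribution in the OPE calculation cancels precisely for this $\kappa_{e,c}$ rather than for the naive restriction of $\kappa_c$.
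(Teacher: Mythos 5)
Your outline correctly identifies the three ingredients---centrality of $I_{(-1)}$ and of the coefficients of $\on{cdet}(Z)$, identification of their symbols with ${}^e$-invariants, and free generation via Theorem~\ref{Th:FF-theorem-for-centralizer} combined with Theorem~\ref{Th:associated-graded}---and this matches the route the paper has in mind with its pointer to \cite{CheMol09}. Two things need attention, however. First, your identification of the classical limits is off: the coefficients of a column determinant of $\tau+E_{(-1)}$ correspond to the \emph{coefficients of the characteristic polynomial} (elementary symmetric polynomials of $E$), not to the power traces $\on{tr}(E^i)$. For $i\ge 3$ the element ${}^e(\on{tr}E^{i})$ differs from ${}^e\sigma_{i}$ by genuine lower-order products (already for $n=3$, ${}^e\sigma_3=-a_{11}a_{23}+a_{13}a_{21}$ while ${}^e(\on{tr}E^3)=3a_{13}a_{21}+6a_{22}a_{23}$), so $\sigma(Q_i)$ is proportional to ${}^e\sigma_{i+1}$ and not to ${}^e(\on{tr}E^{i+1})$. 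Since both $\{\on{tr}E^i\}$ and $\{\sigma_i\}$ are good generating systems of the same degrees, the final conclusion survives, but the identification as stated is incorrect and should be replaced by the characteristic-polynomial coefficients.

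Second, and more importantly, the centrality of $\on{cdet}(Z)$ is the whole content of the theorem, and your proposal does not establish it: you assert that the column-permutation identities of \cite{CheMol09} ``transfer,'' but $Z$ is \emph{not} the Chervov--Molev matrix of a $\mf{gl}$-type affine algebra. The entries of $Z$ live in $\widehat{\mf{q}}_-$ with $\mf{q}=\mf{g}^e$, whose commutators and, crucially, whose form $\kappa_{e,c}$ governing the double poles in the OPE are not those obtained by naively restricting $\kappa_c$ from $\mf{gl}_n$. The identities underlying \cite{CheMol09} (the Manin-matrix property of $\tau+E_{(-1)}$ at the critical level and the cancellation of anomalies) therefore require a genuine re-derivation for $Z$ and $\kappa_{e,c}$, not a direct citation; you flag this honestly but do not carry it out. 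An alternative that sidesteps the computation entirely---and is arguably what the paper's framework is built for---is to observe that $\sigma_F$ carries $Z(\W^{\kappa_c}(\mf{g},e))$ into $Z(V^{\kappa_{e,c}}(\mf{g}^e))$ (as used in the proof of Theorem~\ref{Th:FF-theorem-for-centralizer}), so that centrality of the $Q_i$ follows once one identifies $Q_i$ with $\sigma_F(\widehat{P}_i)$; the explicit matching of $\sigma_F(\widehat{P}_i)$ with the coefficients of $\on{cdet}(Z)$ then replaces the OPE verification. Either route requires a concrete calculation that is currently absent from the proposal.
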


Note that
$Q_{1}=e_{(-1)}$.

\begin{Ex}
 If $n=3$ then $Q_1=(e_{32})_{(-1)}$ and
\begin{align*}
Q_2=(e_{11})_{(-1)}(e_{32})_{(-1)}-(e_{31})_{(-1)}(e_{12})_{(-1)}
+(e_{32})_{(-2)}.
\end{align*}
If $n=4$ then $Q_1=(e_{43})_{(-1)}$ and
\begin{align*}
& Q_2=\begin{vmatrix}
       (e_{11})_{(-1)} &(e_{13})_{(-1)}\\
       (e_{41})_{(-1)} &(e_{43})_{(-1)}
      \end{vmatrix}
+\begin{vmatrix}
       (e_{22})_{(-1)} &(e_{23})_{(-1)}\\
       (e_{42})_{(-1)} &(e_{43})_{(-1)}
      \end{vmatrix}
+2 (e_{43})_{(-2)},\\
& Q_3=\begin{vmatrix}
       (e_{11})_{(-1)} &(e_{12})_{(-1)}&(e_{13})_{(-1)}
\\ (e_{21})_{(-1)} &(e_{22})_{(-1)}&(e_{23})_{(-1)}\\
       (e_{41})_{(-1)} &(e_{42})_{(-1)} &(e_{43})_{(-1)}
      \end{vmatrix}
+\begin{vmatrix}
       (e_{11})_{(-1)} &(e_{13})_{(-2)}\\
       (e_{41})_{(-1)} &(e_{43})_{(-2)}
      \end{vmatrix}\\
&\qquad +\begin{vmatrix}
       (e_{22})_{(-2)} &(e_{23})_{(-1)}\\
       (e_{42})_{(-2)} &(e_{43})_{(-1)}
      \end{vmatrix}
+\begin{vmatrix}
       (e_{22})_{(-1)} &(e_{23})_{(-2)}\\
       (e_{42})_{(-1)} &(e_{43})_{(-2)}
      \end{vmatrix}
+2(e_{43})_{(-3)}.
\end{align*}
Here we use vertical brackets as short-hand notation for column determinants in non-commutative rings.
\end{Ex}

Let $\chi\in (\fing^e)^*$,
and set $\chi_{ij}:=\chi(e_{ij})$.
Also, put $\partial_u:=\frac{d}{du}$.
Let $A$ be the 
$(n-1)\times (n-1)$ matrix with entries
in $U(\fing^e)\* \C [u^{-1}]\*  \C[ \partial_u]$
obtained by replacing
$(e_{ij})_{(-1)}$ by $u^{-1} e_{ij}+\chi_{ij}$
and $\tau$ by $-\partial_u$:
Its column determinant $\on{cdet}(A)$ is an element of
$U(\fing^e)\*\C[u^{-1}]\* \C[\partial_u]$.
Write
\begin{eqnarray*}
 \on{cdet}(A)&=&A_1 (-\partial_u)^{n-2}+A_2(-\partial_u)^{n-1}
+\dots + A_{n-1}(-\partial_u)+A_{n-1},\\
A_i&=&A_i^{(0)}u^{-i}+A_i^{(1)}u^{1-i}+\dots 
+A_{i}^{(i-1)}u^{-1}+A_{i}^{(i)}
\end{eqnarray*}
and note that
$A_1^{(0)}=e$.

\begin{Th}
 For a regular $\chi\in (\fing^e)^*$ 
we have
\begin{align*}
\mc{A}_{e,\chi}=\C[I,\ A_i^{(j)};i=1,\dots, n-1, j=0,1,\dots,i-1].
\end{align*}
\end{Th}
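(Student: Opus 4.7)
The plan is to evaluate the image algebra $\mathcal{A}_{e,\chi} = \Phi_{\chi}(Z(V^{\kappa_{e,c}}(\fing^e)),u)$ on the generators of the center described in the preceding theorem. By Lemma~\ref{Lem:FFTL} applied to the $T$-invariant subalgebra $Z(V^{\kappa_{e,c}}(\fing^e)) \subset U(\affg^e_-)$, its image under $\Phi_\chi(?, u)$ is $\C$-linearly spanned by the coefficients $\Phi_{\chi,n}(a)$ for $a$ in the center. Since $\Phi_\chi(?,u)$ is an algebra homomorphism and $Z(V^{\kappa_{e,c}}(\fing^e))$ is freely generated as a $\C$-algebra by $I_{(-1)}, Q_1, \dots, Q_{n-1}$ together with their $T$-translates, $\mathcal{A}_{e,\chi}$ is generated as a $\C$-algebra by all $\Phi_{\chi,n}(I_{(-1)})$ and $\Phi_{\chi,n}(Q_i)$ with $n\ge 0$; the $T$-translates contribute nothing new because \eqref{T-shift} forces $\Phi_{\chi,m}(Ta) = (m-1)\Phi_{\chi,m-1}(a)$.

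A direct calculation gives $\Phi_\chi(I_{(-1)}, u) = u^{-1} I + \chi(I)$, contributing the generator $I$ and a scalar. The key step for the $Q_i$'s is the column-determinant identity
\begin{equation*}
\Phi_\chi(\on{cdet}(Z),u) \,=\, \on{cdet}(A).
\end{equation*}
To make sense of the left-hand side I would extend $\Phi_\chi(?,u)$ to an algebra homomorphism from $U(\affg^e_-\oplus \C\tau)$ into the operator algebra on $U(\fing^e)\otimes\C[u,u^{-1}]$ generated by multiplications together with $\partial_u$, by setting $\Phi_\chi(\tau, u) := -\partial_u$. Treating $\tau$ as the translation operator $T$ of $V^{\kappa_{e,c}}(\fing^e)$ (so that $[\tau, x_{(-m)}] = m x_{(-m-1)}$), compatibility with the commutation relations of $U(\affg^e_-\oplus\C\tau)$ reduces to the check
\begin{equation*}
\bigl[-\partial_u,\; u^{-m}x + \delta_{m,1}\chi(x)\bigr] \,=\, m u^{-m-1}x \,=\, \Phi_\chi(m x_{(-m-1)},u),
\end{equation*}
which is immediate. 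Thus the extended $\Phi_\chi$ is a well-defined algebra homomorphism, and since $A$ is the entrywise image of $Z$ under it, the displayed identity follows from the general fact that algebra homomorphisms commute with the formal evaluation of column determinants.

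Normal-ordering both sides with $-\partial_u$ (respectively $\tau$) placed on the right and equating the coefficients of $(-\partial_u)^{n-1-i}$ yields $\Phi_\chi(Q_i, u) = A_i$. Extracting the coefficient of $u^{-n}$ in the expansion $A_i = A_i^{(0)}u^{-i} + \dots + A_i^{(i-1)}u^{-1} + A_i^{(i)}$ gives $\Phi_{\chi,n}(Q_i) = A_i^{(i-n)}$ for $0 \le n \le i$, and $0$ for $n > i$. The term $A_i^{(i)} = \Phi_{\chi,0}(Q_i)$ is obtained via the substitution $(e_{jk})_{(-m)} \mapsto \delta_{m,1}\chi_{jk}$ and therefore lies in $\C$; it may be dropped from the generating set. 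The surviving non-scalar generators of $\mathcal{A}_{e,\chi}$ are exactly $I$ together with $A_i^{(j)}$ for $1\le i\le n-1$ and $0 \le j \le i-1$, which gives the asserted equality (freeness of these generators follows from the count $1 + n(n-1)/2 = (\dim\mf{g}^e + \ell)/2$ combined with Theorem~\ref{Th:Main}).

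The principal obstacle is making the column-determinant identity $\Phi_\chi(\on{cdet}(Z),u) = \on{cdet}(A)$ rigorous, since one must keep careful track of the non-commutativity between $\tau$ and the $(e_{jk})_{(-1)}$ on the left and between $-\partial_u$ and $u^{-1}$ on the right. Once the extended $\Phi_\chi$ is verified to be an algebra homomorphism through the commutator check above, the rest is bookkeeping of coefficients, analogous to the computation in \cite{CheMol09} for the case $e = 0$.
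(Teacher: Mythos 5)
Your proof is correct and supplies precisely the argument the paper leaves implicit (the paper states the theorem with no proof, relying on the reader to transfer the structure theorem for $Z(V^{\kappa_{e,c}}(\fing^e))$ through $\Phi_\chi$). The pivotal observation — that $\Phi_\chi$ extends to an algebra homomorphism on $U(\affg^e_-\oplus\C\tau)$ by sending $\tau\mapsto -\partial_u$, so that the column-determinant identity $\Phi_\chi(\on{cdet}Z,u)=\on{cdet}A$ follows formally and forces $\Phi_\chi(Q_i,u)=A_i$ — is the right one, and your commutator check is exactly what is needed. You also correctly read the paper's rule $[\tau,x_{(-m)}]=mx_{(-m)}$ as a misprint for $[\tau,x_{(-m)}]=mx_{(-m-1)}$ (the latter is what makes $\tau$ act as the translation operator and is confirmed by the displayed formulas for $Q_2$), and your final bookkeeping — discarding the scalar $A_i^{(i)}=\Phi_{\chi,0}(Q_i)$ and matching the count $1+n(n-1)/2=(\dim\fing^e+\ell)/2$ — is accurate.

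Two minor points worth flagging for a polished write-up. First, when you deduce $\Phi_\chi(Q_i,u)=A_i$ from the column-determinant identity, you should state that you are comparing the PBW-normalized expansions (Lie generators on the left, powers of $\tau$, respectively $-\partial_u$, on the right); this is what makes the coefficient-extraction unambiguous, since $u^{-1}$ and $\partial_u$ do not commute. Second, the appeal to "Theorem~\ref{Th:Main}" at the end to get freeness is circular as stated — that theorem's proof depends on the general quantization machinery, not on this example — but the freeness is not actually part of the assertion being proved, so you may simply drop that parenthetical.
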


\begin{Ex}
 If $n=3$ then
\begin{align*}
& A=\begin{vmatrix}
-\partial_u+u^{-1}e_{11}+\chi_{11} &u^{-1}e_{12}+\chi_{12}\\
u^{-1}e_{31}+\chi_{31}&u^{-1}e_{32}+\chi_{32}
\end{vmatrix},\\
&A_1=e_{32}u^{-1}+\chi_{32},\\
&  A_2=\begin{vmatrix}
       e_{11}u^{-1}+\chi_{11}& e_{12}u^{-1}+\chi_{12}\\
e_{31}u^{-1}+\chi_{31}&e_{32}u^{-1}+\chi_{32}
      \end{vmatrix}
+e_{31}u^{-2}.
 \end{align*}
If $n=4$ then
\begin{align*}
& A=\begin{vmatrix}
-\partial_u+u^{-1}e_{11}+\chi_{11} 
&u^{-1}e_{12}+\chi_{12}
&u^{-1}e_{13}+\chi_{13}\\
u^{-1}e_{21}+\chi_{21}
&-\partial_u+u^{-1}e_{22}+\chi_{22}& 
u^{-1}e_{23}+\chi_{23}
\\
u^{-1}e_{41}+\chi_{41}&u^{-1}e_{42}+\chi_{41}
&u^{-1}e_{43}+\chi_{43}
\end{vmatrix},\\
&A_1=e_{43}u^{-1}+\chi_{43},\\
& A_2=\begin{vmatrix}
       e_{11}u^{-1}+\chi_{11} &e_{13}u^{-1}+\chi_{13}\\
       e_{41}u^{-1}+\chi_{41} &e_{43}u^{-1}+\chi_{43}
      \end{vmatrix}
+\begin{vmatrix}
       e_{22}u^{-1}+\chi_{22} &e_{23}u^{-1}+\chi_{23}\\
       e_{42}u^{-1}+\chi_{41} &e_{43}u^{-1}+\chi_{43}
      \end{vmatrix}
+2 e_{43}u^{-2},\\
& A_3=\begin{vmatrix}
       e_{11}u^{-1}+\chi_{11} &e_{12}u^{-1}+\chi_{12}&e_{13}u^{-1}+\chi_{13}
\\ e_{21}u^{-1}+\chi_{21} &e_{22}u^{-1}+\chi_{22}&e_{23}u^{-1}+\chi_{23}\\
       e_{41}u^{-1}+\chi_{41} &e_{42}u^{-1}+\chi_{42} &e_{43}u^{-1}+\chi_{43}
      \end{vmatrix}
+\begin{vmatrix}
       e_{11}u^{-1}+\chi_{11} &e_{13}u^{-2}\\
       e_{41}u^{-1}+\chi_{41} &e_{43}u^{-2}
      \end{vmatrix}\\
&\qquad +\begin{vmatrix}
       e_{22}u^{-2} &e_{23}u^{-1}+\chi_{23}\\
       e_{42}u^{-2} &e_{43}u^{-1}+\chi_{43}
      \end{vmatrix}
+\begin{vmatrix}
       e_{22}u^{-1}+\chi_{22} &e_{23}u^{-2}\\
       e_{42}u^{-1}+\chi_{42} &e_{43}u^{-2}
      \end{vmatrix}
+2e_{43}u^{-3}.
\end{align*}

\end{Ex}

\end{document}